\newcommand{\image}[2] 
{     \begin{center} 
	\includegraphics[scale=#1]{#2.png}
      \end{center}}
\newcommand{\nref}[1]{\hyperref[#1]{\ref*{#1}}}
\newcommand{\U}{\mathcal{U}} 	
\renewcommand{\O}{\mathcal{O}} 	
\newcommand{\B}{\mathfrak{B}}   
\newcommand{\T}{\mathfrak{T}} 	
\newcommand{\g}{\mathfrak{g}} 	
\newcommand{\Z}{\mathbb{Z}}  	
\newcommand{\C}{\mathbb{C}}  	
\renewcommand{\k}{\Bbbk}  	
\newcommand{\K}{\Bbbk}          
\newcommand{\F}{\mathbb{F}}  	
\newcommand{\D}{\mathbb{D}}  	
\renewcommand{\S}{\mathbb{S}}  	
\newcommand{\A}{\mathbb{A}}  	
\newcommand{\N}{\mathbb{N}}     
\renewcommand{\H}{\mathcal{H}} 
\newcommand{\BB}{\mathbb{B}}
\newcommand{\ydm}{Yetter-Drinfel'd module }
\newcommand{\ydms}{Yetter-Drinfel'd modules }
\newcommand{\ydmP}{Yetter-Drinfel'd module}
\theoremstyle{plain}
\newtheorem{theorem}{Theorem}[section]
\newtheorem{corollary}[theorem]{Corollary}
\newtheorem{definition}[theorem]{Definition}
\newtheorem{example}[theorem]{Example}
\newtheorem{lemma}[theorem]{Lemma}
\newtheorem{remark}[theorem]{Remark}
\DeclareMathOperator{\tr}{tr}
\DeclareMathOperator{\Aut}{Aut}
\DeclareMathOperator{\Env}{Env}
\newcommand{\id}{\mathrm{id}}
\DeclareMathOperator{\End}{End}
\DeclareMathOperator{\SL}{SL}
\DeclareMathOperator{\cha}{char}
\DeclareMathOperator{\Lin}{Lin}
\DeclareMathOperator{\Cent}{Cent}
\newcommand{\trace}[2]{\tr^{#1}_{#2}}
\newcommand{\traceBVt}[1]{\tr^{#1}_{\B(M)}(t)}
\newcommand{\trid}{\triangleright}
\newcommand{\Perm}[1]{\S_{#1}}
\newcommand{\APerm}[1]{\A_{#1}}
\newcommand{\op}{\mathrm{op}}
\newcommand{\ot}{\otimes}
\newcommand{\sort}{\mathrm{sort}}
\begin{document}

\title{Factorization of Graded Traces on Nichols Algebras}

\author{
    Simon Lentner \\
    Algebra and Number Theory (AZ),\\
    Universit\"at Hamburg \\
    Bundesstra{\ss}e 55, D-20146 Hamburg \\
    simon.lentner(at)uni-hamburg.de \\
    \and
    Andreas Lochmann\footnote{corresponding author} \\
    Fachbereich Mathematik und Informatik, \\
    Philipps-Universit\"at Marburg \\
    Hans-Meerwein-Stra{\ss}e, D-35032 Marburg \\
    lochmann(at)mathematik.uni-marburg.de \\
}

\maketitle

\begin{abstract}
We study the factorization of the Hilbert series and more
general graded traces of a Nichols algebra into cyclotomic polynomials. Nichols
algebras play the role of Borel subalgebras of finite-dimensional quantum
groups, so this observation can be viewed as an analog to the factorization of
the order of finite Lie groups into cyclotomic polynomials. We prove results on
this factorization and give a table of many examples of rank 1 over nonabelian
groups.
\end{abstract}


\tableofcontents

\section{Introduction}

A Nichols algebra $\B(M)$ over a finite group is a graded algebra and in
particular a braided Hopf algebra, that plays the role of a Borel
subalgebra of a finite-dimensional quantum group. For example, the Borel
subalgebras $u_q^\pm(\g)$ of the Frobenius-Lusztig kernels $u_q(\g)$ are
finite-dimensional Nichols algebras over quotients of the root lattice of $\g$
as an abelian group.

In this article we discuss the following curious phenomenon, that is apparent
throughout the ongoing classification of finite-dimensional Nichols algebra:
The \emph{graded dimension} or \emph{Hilbert series} of any finite-dimensional
Nichols algebra known so far factorizes as a polynomial in one variable into the
product of cyclotomic polynomials. For Nichols algebras over abelian groups,
this factorization can be explained (see below), but this mechanism cannot
explain the empirically observed complete factorization of the Hilbert series of
Nichols algebras over nonabelian groups.

 However, it has been shown in a series of
joint papers of the second author (\cite{GHV}, \cite{HLV}, \cite{HLV2}) that the
existence of such a complete factorization into cyclotomic polynomialy implies
strong bounds on the number of relations in low degree in the Nichols algebra. On
the other hand, the first author has obtained in \cite{Len12} new families of
Nichols algebras over nonabelian groups, where such a complete factorization
appears by construction.

A similar factorization into cyclotomic
polynomials appears for the order of any finite simple group of Lie type, which
is due to the action of the Weyl group on the space of polynomial functions on
the root lattice (\cite{Carter85}, Sec. 2.9). Moreover, the existence of such a
factorization continues to characters of unipotent representations
(\cite{Carter85}, Sec 13.7--13.9). This observation may be less surprising, due to
the fact that Lusztig has related the representation theory of $u_q(\g)$ over
$\C$ to the representation theory of the corresponding Lie group over a finite
field. The precise connection seems however unclear at this point. At least we
shall see below that the Weyl-group theory of Nichols algebras indeed generally
implies a factorization of the Hilbert series and also of graded characters, but
for nonabelian groups, this is not the finest factorization we observe.

More concretely, a key observation in this article
is that in the available examples not only the Hilbert series itself (the
graded trace of the identity), but arbitrary \emph{graded characters} of the
group acting on the Nichols algebras factorize into cyclotomic polynomials. 
The aim of the present article is to gather and prove basic facts about this
factorization, calculate a comprehensive list of examples, and start with
providing systematic factorization mechanisms.

\bigskip

\noindent
The content of this article is as follows:

In Section 2 we define graded traces and prove the relevant basic facts
about graded traces on finite-dimensional Nichols algebras, including
additivity and multiplicativity with respect to the representations,
rationality, and especially Poincar\'e duality. Most of these
contents have appeared in literature in some form, we gather them here for
convenience. We also give an example for a graded representation, where the
graded characters factor in
a nice way which is not reflected by the represention itself. This
phenomenon will occur when we study the characters of Nichols algebras
over non-abelian groups.

In Section 3 we study factorization mechanisms for Nichols algebras, and
hence for their graded traces and especially their Hilbert series.
The root system $\Delta^+$ of a Nichols algebra $\B(M)$ in the sense of
\cite{AHS10} directly presents a factorization of $\B(M)$ as graded vectorspace:
This completely explains the factorization of the graded trace of an
endomorphism $Q$ that respects the root system grading.

For Nichols algebras over abelian groups, we use the theory of Lyndon
words to significantly weaken the assumption on $Q$. On the other hand, we
give an example of an endomorphism (the outer automorphism of $A_2$ containing
a loop) where this mechanism fails; a factorization of the graded trace is
nevertheless observed and can be tracked to the surprising existence of an
 alternative ``symmetrized'' PBW-basis. The formulae appearing involve the
orbits of the roots under $Q$ and are in resemblance to the formulae given in
\cite{Carter85} Sec. 13.7 for finite Lie groups.

For Nichols algebras over nonabelian groups, the root system
factorization discussed above still applies, but is too crude in general to
explain the full observed factorization into cyclotomic polynomials. We give
however a family of examples constructed as covering Nichols algebras by the
first author in \cite{Len13a}, where the complete factorization can be tracked
back to a finer PBW-basis. The key ingredient is an additional root system in a
symplectic vector space over $\F_2$ (see \cite{Len13b}), a structure appearing
as well in the representation theory of finite Lie groups, see \cite{Carter85}
Sec. 13.8.

Finally we give results on the divisibility of the Hilbert series derived by
the second author in \cite{Lochmann_divisibility}. By the freeness of the
Nichols algebra over a sub-Nichols algebra (\cite{Skr_freeness}, \cite{G_freeness})
one can derive a divisibility of the graded trace $\trace{g}{}$ by that of a
sub-Nichols algebra. Moreover, in many cases there is a shift-operator $\xi_x$
for some $x\in G$, which can be used to prove that there is an additional
cyclotomic divisor of the graded trace. This holds in particular for $x\in G$
commuting with $g$.

Section 4 finally displays a table of graded characters for all known examples 
of finite-dimensional Nichols algebras of non-abelian group type and rank $1$,
which were computationally accessible to us. With respect to the previous
discussion, the rank $1$ case is particularly challenging, as the Weyl group is
trivial. We observe that again all graded characters factorize in these examples.

\section{Hilbert Series and Graded Traces}
\subsection{The Graded Trace \texorpdfstring{$\trace{Q}{V}$}{trQ(V)}}
In the following we suppose $V=\bigoplus_{n\geq 0} V_n$ to be a graded
vector space with finite-dimensional layers $V_n$. We frequently call a linear
map $Q:V\rightarrow V$ an \emph{operator} $Q$. We denote the identity operator
by $1_V=\oplus_{n\geq 0}1_{V_n}$ and the projectors to each $V_n$ by $P_n$. An
operator $Q$ is called \emph{graded} if one of the following equivalent
conditions is fulfilled:
\begin{itemize}
 \item $Q$ is commuting with all projections $P_n$.
 \item $Q$ preserves all layers $V_n$. 
\end{itemize}
We denote the restriction of a graded operator $Q:V\rightarrow V$ to each $V_n$
by $Q_n\in \End(V_n)$. An operator $Q$ is called \emph{algebra operator}
resp.\ \emph{Hopf algebra operator} if $V$ is a graded algebra resp.\ Hopf algebra and
$Q$ is an algebra resp.\ Hopf algebra morphism.

\begin{definition}
For a graded vector space $V$ with finite-dimensional layers define 
\begin{itemize}
  \item the \emph{Hamilton operator} $E\in End(V)$ by $E|_{V_n}=n$, and
  \item the \emph{Boltzmann operator} $t^E\in End(V)[[t]]$ as $End(V)$-valued
    series $t^E=\sum_{n\geq 0}t^n\cdot P_n$, i.e.\ $t^E|_{V_n}=t^n$.
\end{itemize}
\end{definition}
\begin{remark}
  Let $V$ be a graded algebra, then the Boltzmann operator is an automorphism 
  of the graded algebra $V[[t]]$.
\end{remark}
Note that the trace of the Boltzmann operator is the \emph{Hilbert series}
$\tr\left(t^E\right)=\sum_{n\geq 0} \dim(V_n)t^n=:\H(t)$.
If $V$ is finite-dimensional, $\H(t)$ is a polynomial and has a well-defined
value at $t=1$, which turns out to be the dimension:
\begin{align*}
    \tr\left(t^E\right)|_{t=1}
    &=\tr\left(1^E\right)
    =\tr\left(1_V\right)=\dim\left(V\right)
  \end{align*}

\begin{definition}
For $V$ a graded vector space with finite-dimensional layers and $Q$ a graded
operator, define the \emph{graded trace} of $Q$ as the series
$\trace{Q}{V}(t):=\tr\left(t^EQ\right)
  =\sum_{n\geq 0} t^n\cdot \tr\left(Q_n\right)$
in the variable $t$.
\end{definition}

\begin{example}
In particular the identity $Q=1_V$ has as its graded trace 
$\trace{1}{V}(t)$ again the Hilbert
series $\tr\left(t^E\right)=\H(t)$.
More generally, for $\lambda\in\k^\times$, the scaling
operator $Q=\lambda 1_V$ has graded trace
$\trace{\lambda}{V}(t)
=\tr\left( t^E\cdot \lambda1_V\right)
=\lambda\cdot\tr\left( t^E\right)
=\lambda\cdot\trace{1_V}{V}(t)=\lambda\cdot\H(t)$.
\end{example}

\begin{remark}
  Assume $V$ is finite-dimensional. Then only
  finitely many layers $V_n$ are non-trivial. Thus $\trace{Q}{V}(t)$ is a
  polynomial in the formal variable $t$ with $\trace{Q}{V}(1)=\tr(Q)$.
  If $A$ is an infinite-dimensional algebra, it is proven in
  \cite{Jing97} that $\trace{Q}{A}$ is a rational function if either
  \begin{itemize}
   \item $A$ is commutative and finitely generated,
    \item $A$ is right noetherian with finite global dimension, or
    \item $A$ is regular.
  \end{itemize}
\end{remark}

\begin{lemma}
  The graded trace $\trace{Q}{V}(t)$ is linear in $Q$.
\end{lemma}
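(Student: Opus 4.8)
The plan is to verify linearity directly from the definition of the graded trace, reducing everything to the linearity of the ordinary trace on each finite-dimensional layer.

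First I would fix two graded operators $Q, R : V \to V$ and scalars $\alpha, \beta \in \k$, and note that $\alpha Q + \beta R$ is again a graded operator, since the space of operators commuting with all projections $P_n$ is a linear subspace of $\End(V)$. Its restriction to the layer $V_n$ is $(\alpha Q + \beta R)_n = \alpha Q_n + \beta R_n \in \End(V_n)$.

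Next I would compute, using the series expression from the definition,
\begin{align*}
  \trace{\alpha Q + \beta R}{V}(t)
    &= \sum_{n\geq 0} t^n \cdot \tr\bigl((\alpha Q + \beta R)_n\bigr)
     = \sum_{n\geq 0} t^n \cdot \tr(\alpha Q_n + \beta R_n) \\
    &= \sum_{n\geq 0} t^n \cdot \bigl(\alpha \tr(Q_n) + \beta \tr(R_n)\bigr)
     = \alpha \sum_{n\geq 0} t^n \tr(Q_n) + \beta \sum_{n\geq 0} t^n \tr(R_n) \\
    &= \alpha \cdot \trace{Q}{V}(t) + \beta \cdot \trace{R}{V}(t),
\end{align*}
where the middle equality is exactly the linearity of the ordinary trace on the finite-dimensional space $V_n$, which is well-defined precisely because each layer is finite-dimensional by hypothesis. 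The manipulation of the (possibly infinite) sums is legitimate coefficientwise in the ring of formal power series $\k[[t]]$, so no convergence issue arises.

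Honestly, there is no real obstacle here: the only point requiring a word of care is that the statement makes sense, i.e.\ that $\trace{Q}{V}(t)$ is defined for every graded operator and that the layerwise traces are finite — both guaranteed by the standing assumption that the $V_n$ are finite-dimensional. One could alternatively phrase the proof via $\tr(t^E(\alpha Q + \beta R)) = \alpha\,\tr(t^E Q) + \beta\,\tr(t^E R)$, using linearity of $\tr$ together with bilinearity of composition, but since $t^E Q$ is only a formal $\End(V)$-valued series this still ultimately amounts to the layerwise computation above; I would present the coefficientwise version as it is the cleanest.
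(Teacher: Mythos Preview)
Your proof is correct. The paper states this lemma without proof, treating it as immediate from the definition; your layerwise verification via linearity of the ordinary trace on each finite-dimensional $V_n$ is exactly the natural argument and there is nothing to add.
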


\begin{lemma}\label{lm_product}
  For graded vector spaces $V,W$ with finite-dimensional layers, the sum
  $V\oplus W$ and product $V\otimes W$ again are graded
  vector spaces with finite-dimensional layers. The respective codiagonal grading
  by definition implies
  $$t^{E_{V\oplus W}}=t^{E_V}\oplus t^{E_W}
  \qquad \text{and} \qquad t^{E_{V\otimes W}}=t^{E_V}\otimes t^{E_W}\,. $$
  Then the following properties for the graded trace hold
  immediately from the respective properties of the trace:
  $$\trace{Q\oplus R}{V\oplus W}(t)
    =\trace{Q}{V}(t)+\trace{R}{W}(t)
  \qquad \text{and} \qquad
    \trace{Q\otimes R}{V\otimes W}(t)
    =\trace{Q}{V}(t)\cdot\trace{R}{W}(t)\,. $$
\end{lemma}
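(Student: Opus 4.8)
The plan is to reduce every assertion to the corresponding statement for the ordinary trace on the finite-dimensional layers, once the codiagonal gradings on $V\oplus W$ and $V\otimes W$ are fixed. First I would recall these gradings explicitly: $(V\oplus W)_n = V_n\oplus W_n$ and $(V\otimes W)_n = \bigoplus_{i+j=n} V_i\otimes W_j$. Finite-dimensionality of the layers is then immediate, since a direct sum of two finite-dimensional spaces is finite-dimensional, and the sum defining $(V\otimes W)_n$ has only the $n+1$ summands with $0\le i,j$ and $i+j=n$, each a tensor product of finite-dimensional spaces.

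Next I would verify the two Boltzmann-operator identities directly on layers. On $V_n\oplus W_n$ the operator $t^{E_{V\oplus W}}$ acts by the scalar $t^n$, and so does $t^{E_V}\oplus t^{E_W}$ since each summand is multiplied by $t^n$; hence the two series in $\End(V\oplus W)[[t]]$ agree. Likewise, on the summand $V_i\otimes W_j$ of $(V\otimes W)_{i+j}$ the operator $t^{E_{V\otimes W}}$ acts by $t^{i+j}$, while $t^{E_V}\otimes t^{E_W}$ acts by $t^i\cdot t^j = t^{i+j}$; summing over $i+j=n$ gives the claimed equality of series.

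With these identities in hand, I would note that if $Q$ and $R$ are graded then so are $Q\oplus R$ and $Q\otimes R$, as they preserve the layers just described, so their graded traces are defined. For the direct sum, restricting to the $n$-th layer gives $\tr\big((t^{E_{V\oplus W}}(Q\oplus R))_n\big) = \tr(t^nQ_n) + \tr(t^nR_n)$ by additivity of the trace on $V_n\oplus W_n$; summing over $n$ yields $\trace{Q\oplus R}{V\oplus W}(t) = \trace{Q}{V}(t) + \trace{R}{W}(t)$. For the tensor product, the restriction of $(t^{E_V}Q)\otimes(t^{E_W}R)$ to the $n$-th layer is $\bigoplus_{i+j=n}(t^iQ_i)\otimes(t^jR_j)$, whose trace is $\sum_{i+j=n} t^{i+j}\tr(Q_i)\tr(R_j)$ by multiplicativity of the trace on a tensor product of finite-dimensional spaces; this is exactly the coefficient of $t^n$ in the Cauchy product $\trace{Q}{V}(t)\cdot\trace{R}{W}(t)$, which proves the second formula.

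The argument is entirely bookkeeping; the only point deserving care is to make the manipulations with $\End(V)$-valued formal power series rigorous by carrying out each computation on a fixed finite-dimensional layer before reassembling the series, and to keep in mind that the trace of a graded operator is by definition the layerwise sum, so that the classical additivity and multiplicativity of the finite-dimensional trace transfer without change. I do not expect a genuine obstacle here.
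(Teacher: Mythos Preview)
Your proof is correct and follows exactly the approach the paper indicates: the paper gives no proof beyond the phrase ``hold immediately from the respective properties of the trace,'' and you have simply spelled out those immediate steps layer by layer. There is nothing to add.
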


\begin{remark}
  The Boltzmann operator $t^E$ is used in a thermodynamic
  ensemble to stochastically average over eigenspaces of the Hamiltonian $E$
  weighted by the eigenvalue of $E$ (``energy'') and depending on a
  free  parameter $t$ (for $t=e^{-\beta}$ with $\beta$ the ``inverse
  temperature''), see \cite{Sak11} Chp. 3, especially 3.4.50--3.4.53.\\

  Equivalenty, after a so-called Wick rotation, the Boltzmann operator is
  the time-development operator in a quantum field theory, see \cite{Sak11}
  2.6.21: 
  $$U(T_1,T_2)=U(T_2-T_1)=t^E,\qquad t=e^{-i\Delta T/\hbar}$$
  
  The trace of $t^E$ is called \emph{partition function} $Z$ and yields the
  Hilbert series is in our situation.
  To yield a probablity distribution one normalizes the Boltzmann operator
  $t^E$ by the partition function to obtain trace $1$ and the result is
  known as the quantum \emph{density operator}
  $\rho:=t^E/\tr\left(t^E\right)$.
  Finally, then up to the factor $Z$, the graded trace is
  the quantum \emph{expectation value}
  \begin{align*}
  \langle Q\rangle
  &:=\tr\left(\rho\, Q\right)
  =\frac{\tr\left(t^EQ\right)}{\tr\left(t^E\right)}
  =\frac{\trace{Q}{V}(t)}{\H(t)} 
  \end{align*}
\end{remark}

  \subsection{Poincar\'e Duality}

The Hilbert series of a Nichols algebra exhibits a Poincar\'e Duality,
see e.g. \cite{MS00} Rem. 2.2.4. We generalize this approach to the calculation
of arbitrary algebra operator traces in Nichols algebras:

\begin{lemma} \label{LEM_trace_of_inverse}
Let $\B(M)$ be a finite-dimensional Nichols algebra with top degree $L$
and integral $\Lambda$. Let $Y$ be an arbitrary algebra automorphism of
$\B(M)$ and the scalar $\lambda_Y\in\k^\times$ such that
$Y\Lambda=\lambda_Y\cdot \Lambda$. Then 
$\tr\left(Y\right)=\lambda_Y\cdot \tr\left(Y^{-1}\right)$.
\end{lemma}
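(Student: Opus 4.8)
The plan is to use the finite-dimensional Nichols algebra's Poincaré pairing, which is induced by multiplication into the top degree $L$ and the integral $\Lambda$. Concretely, for each $n$ with $0\le n\le L$, multiplication $\B(M)_n \otimes \B(M)_{L-n} \to \B(M)_L \cong \k\cdot\Lambda$ is a non-degenerate bilinear pairing; this is the standard statement (see \cite{MS00} Rem.\ 2.2.4) that $\B(M)$ is a graded Frobenius algebra with respect to the functional $\Lambda^* : \B(M) \to \k$ dual to $\Lambda$. First I would fix bases $(e^{(n)}_i)_i$ of $\B(M)_n$ and dual bases $(f^{(L-n)}_i)_i$ of $\B(M)_{L-n}$ so that $e^{(n)}_i \cdot f^{(L-n)}_j = \delta_{ij}\,\Lambda$ (up to the identification $\B(M)_L=\k\Lambda$).

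Next, since $Y$ is an algebra automorphism, for $x\in\B(M)_n$ and $y\in\B(M)_{L-n}$ one has $Y(x)\cdot Y(y) = Y(xy)$, and $xy\in\B(M)_L$, so $Y(xy)=\lambda_Y\cdot(xy)$ under the identification $\B(M)_L=\k\Lambda$ (this is exactly where the scalar $\lambda_Y$ defined by $Y\Lambda=\lambda_Y\Lambda$ enters). The key step is to compare the matrix of $Y$ on $\B(M)_n$ with the matrix of $Y^{-1}$ on $\B(M)_{L-n}$ via this pairing. Writing $Y(e^{(n)}_i)=\sum_k A_{ki}\,e^{(n)}_k$ and $Y^{-1}(f^{(L-n)}_j)=\sum_l B_{lj}\,f^{(L-n)}_l$, one evaluates $Y(e^{(n)}_i)\cdot f^{(L-n)}_j$ in two ways: directly it equals $\sum_k A_{ki}\delta_{kj}\Lambda = A_{ji}\Lambda$; alternatively, applying $Y$ to $e^{(n)}_i \cdot Y^{-1}(f^{(L-n)}_j)$ gives $Y\big(\sum_l B_{lj}\,e^{(n)}_i f^{(L-n)}_l\big) = B_{ij}\,Y(\Lambda) = \lambda_Y B_{ij}\Lambda$. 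Hence $A_{ji}=\lambda_Y B_{ij}$, i.e.\ the matrix of $Y$ on $\B(M)_n$ is $\lambda_Y$ times the transpose of the matrix of $Y^{-1}$ on $\B(M)_{L-n}$. Taking traces, $\tr(Y_n)=\lambda_Y\cdot\tr\big((Y^{-1})_{L-n}\big)$.

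Finally I would sum over $n=0,\dots,L$: since summing $\tr\big((Y^{-1})_{L-n}\big)$ over $n$ is the same as summing $\tr\big((Y^{-1})_m\big)$ over $m$, we get $\tr(Y)=\sum_n \tr(Y_n) = \lambda_Y\sum_m \tr\big((Y^{-1})_m\big) = \lambda_Y\cdot\tr(Y^{-1})$, as claimed. (Equivalently, one can phrase the per-degree identity as $\trace{Y}{\B(M)}(t) = \lambda_Y\, t^L\, \trace{Y^{-1}}{\B(M)}(t^{-1})$ and evaluate at $t=1$.) I expect the main obstacle to be stating and citing the non-degeneracy of the multiplication pairing $\B(M)_n\otimes\B(M)_{L-n}\to\B(M)_L$ cleanly — i.e.\ making sure the Frobenius/integral structure is set up so that $\B(M)_L$ is one-dimensional and spanned by $\Lambda$, and that $Y$ acting on it is genuinely multiplication by the scalar $\lambda_Y$; once that is in place the rest is the bookkeeping above. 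A minor subtlety worth a remark is that $Y$ need not be graded a priori, but an algebra automorphism of a connected graded algebra generated in degree one automatically preserves the grading, so $Y_n$ makes sense.
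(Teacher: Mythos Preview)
Your approach is essentially the paper's: both use dual bases with respect to the multiplication pairing into the one-dimensional top degree $\B(M)_L=\k\Lambda$ and compare $Y$ on one side with $Y^{-1}$ on the other. The paper packages the computation as a single global line $\sum_j (Yb^*_j)b_j=Y\sum_j b^*_j(Y^{-1}b_j)$ rather than working degree by degree, but the content is identical.

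One point needs fixing. Your closing claim that an algebra automorphism of a connected graded algebra generated in degree one is automatically graded is false: $x\mapsto x+x^2$ is a non-graded algebra automorphism of $\k[x]/(x^3)$. What is true, and what suffices, is that any such automorphism is \emph{filtered}, since it must preserve the ideal powers $(\B(M)_+)^n=\B(M)_{\ge n}$. With homogeneous dual bases one then has $Y(e^{(n)}_i)\in\B(M)_{\ge n}$, so $Y(e^{(n)}_i)\cdot f^{(L-n)}_j\in\B(M)_{\ge L}=\k\Lambda$ and the higher-degree components of $Y(e^{(n)}_i)$ contribute nothing; your identity $A_{ji}=\lambda_Y B_{ij}$ then holds for the matrices of the associated graded maps, and their traces sum to $\tr(Y)$ and $\tr(Y^{-1})$ by block-triangularity. (The paper's own proof leans on the same point implicitly: the asserted relation $b^*_ib_j=\delta_{ij}\Lambda$ for all $i,j$ is not literally available for homogeneous bases, but the filtered property makes the off-degree terms vanish.)
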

\begin{proof}
Let $\{b_j\}_{j\in J}$ and $\{b^*_j\}_{j\in J}$ be two bases of $\B(M)$ with
$b^*_ib_j=\delta_{ij}\cdot \Lambda$ for all $i,j\in J$.
Then holds
\begin{align*}
\tr\left(Y\right)\cdot\Lambda
\;&=\; \sum_{j\in J} \left(Yb^*_j\right)b_j
\;=\; Y\sum_{j\in J} b^*_j\left(Y^{-1}b_j\right)
\;=\; Y\left(\tr\left(Y^{-1}\right)\cdot\Lambda\right)\\
&=\; \tr\left(Y^{-1}\right)\cdot Y\Lambda
\;=\; \lambda_Y\cdot \tr\left(Y^{-1}\right)\cdot \Lambda\,.
\end{align*}
\end{proof}

\begin{corollary} \label{COR_poincare_duality}
Let $\B(M)$ be a finite-dimensional Nichols algebra with top degree $L$
and integral $\Lambda$. Let $Q$ be an arbitrary algebra automorphism of
$\B(M)$ and the scalar $\lambda_Q\in\k^\times$ such that
$Q\Lambda=\lambda_Q\cdot \Lambda$. Then 
$$\trace{Q}{\B(M)}(t)=\lambda_Q\cdot t^L\cdot \trace{Q^{-1}}{\B(M)}(t^{-1}) $$
\end{corollary}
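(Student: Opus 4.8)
The plan is to apply Lemma~\ref{LEM_trace_of_inverse} layer by layer, exploiting that the integral $\Lambda$ sits in a single homogeneous degree, namely the top degree $L$. First I would recall that since $Q$ is a graded algebra automorphism, each restriction $Q_n\in\End(\B(M)_n)$ is defined, and that Poincar\'e duality for the Nichols algebra means the pairing $(b,c)\mapsto bc\in\B(M)_L\cong\k\Lambda$ restricts to a perfect pairing between $\B(M)_n$ and $\B(M)_{L-n}$. So I would choose the dual bases $\{b_j\}$ and $\{b_j^*\}$ of Lemma~\ref{LEM_trace_of_inverse} to be homogeneous and adapted to this pairing, so that $b_j\in\B(M)_{n}$ forces $b_j^*\in\B(M)_{L-n}$.

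Next I would rerun the computation in the proof of Lemma~\ref{LEM_trace_of_inverse}, but keeping track of degrees. Fix $n$ and let $J_n\subseteq J$ index the basis vectors $b_j$ lying in $\B(M)_{L-n}$, so that the corresponding $b_j^*$ lie in $\B(M)_n$. The same manipulation
\begin{align*}
\tr(Q_n)\cdot\Lambda
&=\sum_{j\in J_n}(Qb_j^*)b_j
=Q\sum_{j\in J_n}b_j^*(Q^{-1}b_j)
=\lambda_Q\cdot\tr\bigl((Q^{-1})_{L-n}\bigr)\cdot\Lambda
\end{align*}
shows $\tr(Q_n)=\lambda_Q\cdot\tr\bigl((Q^{-1})_{L-n}\bigr)$ for every $n$. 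One just has to check that the middle step is legitimate degree-by-degree: $Q^{-1}b_j\in\B(M)_{L-n}$, so $b_j^*(Q^{-1}b_j)\in\B(M)_L$, and summing over $j\in J_n$ computes $\tr\bigl((Q^{-1})_{L-n}\bigr)$ against the dual-basis property, exactly as in the lemma.

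Finally I would assemble the generating function. By definition
\begin{align*}
\trace{Q}{\B(M)}(t)
&=\sum_{n\geq 0}t^n\tr(Q_n)
=\lambda_Q\sum_{n\geq 0}t^n\tr\bigl((Q^{-1})_{L-n}\bigr)
=\lambda_Q\sum_{m\geq 0}t^{L-m}\tr\bigl((Q^{-1})_{m}\bigr)\\
&=\lambda_Q\cdot t^L\sum_{m\geq 0}(t^{-1})^{m}\tr\bigl((Q^{-1})_{m}\bigr)
=\lambda_Q\cdot t^L\cdot\trace{Q^{-1}}{\B(M)}(t^{-1}),
\end{align*}
after the substitution $m=L-n$ (noting $\tr(Q_n)=0$ for $n>L$ since $\B(M)$ has top degree $L$). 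This is the claimed identity.

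The only real subtlety — and the step I would be most careful about — is the compatibility of the dual basis $\{b_j^*\}$ with the grading: one needs that the multiplication pairing $\B(M)_n\times\B(M)_{L-n}\to\B(M)_L$ is nondegenerate on each side, which is precisely the Poincar\'e duality property of finite-dimensional Nichols algebras (the same fact underlying Lemma~\ref{LEM_trace_of_inverse}, where it guarantees the existence of the global dual bases). Given that, the homogeneous refinement is automatic because a nondegenerate pairing between $\bigoplus_n\B(M)_n$ and itself that lands in the one-dimensional top layer must pair $\B(M)_n$ nondegenerately with $\B(M)_{L-n}$ and annihilate all other products for degree reasons. Everything else is the bookkeeping of the substitution $n\leftrightarrow L-n$ in the generating function.
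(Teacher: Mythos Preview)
Your proof is correct. It differs from the paper's only in packaging: the paper applies Lemma~\ref{LEM_trace_of_inverse} in a single stroke to the composite automorphism $Y=t^{E}Q$, observing that $Y\Lambda=t^{L}\lambda_{Q}\Lambda$ so that $\lambda_{Y}=t^{L}\lambda_{Q}$, and then unwinds $\tr(t^{E}Q)=t^{L}\lambda_{Q}\,\tr\bigl((t^{-1})^{E}Q^{-1}\bigr)$ directly. You instead rerun the dual-basis argument of the lemma degree by degree to obtain $\tr(Q_{n})=\lambda_{Q}\,\tr\bigl((Q^{-1})_{L-n}\bigr)$ and then sum. The paper's route is shorter because it exploits that $t^{E}$ is itself an algebra automorphism, so the lemma can be invoked once rather than reproved layerwise; your route makes the degree-shifting mechanism more explicit and isolates exactly where the layerwise nondegeneracy of the pairing is used. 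Both arguments rest on the same underlying Poincar\'e duality, and neither requires anything the other does not.
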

\begin{proof}
We apply Lemma \ref{LEM_trace_of_inverse} to $Y=t^EQ$ and $\lambda_Y=t^L\lambda_Q$:
\begin{align*}
\trace{Q}{\B(M)}(t)
\;&=\; \tr\left(t^EQ\right)
\;=\; t^L\lambda_Q\cdot \tr\left(\left(t^EQ\right)^{-1}\right)
\;=\; t^L\lambda_Q\cdot \tr\left(Q^{-1}\left(t^{-1}\right)^E\right) \\
\;&=\; t^L\lambda_Q\cdot \tr\left(\left(t^{-1}\right)^EQ^{-1}\right)
\;=\; \lambda_Q\cdot t^L \cdot \trace{Q^{-1}}{\B(M)}(t^{-1})
\end{align*}
\end{proof}

The special case $Q=Q^{-1}=1_M$ recovers the Poincar\'e duality 
$\H(t)=t^L\cdot\H(t^{-1})$ of the Hilbert series,
and therefore $\dim\B(M)_l=\dim\B(M)_{L-l}$ for all $l$.

  \subsection{An Example for Factorization only in the Trace} \label{SEC_toy_example}

We want to present a type of graded representations which exhibits
a seemingly paradoxical property: Their graded characters factor
nicely, whereas the representations themselves do not. We will
encounter this property in the case of Nichols algebras of
non-abelian group type of rank 1.


\begin{theorem} \label{THE_toy_example}
Let $G$ be a finite group and let $\pi:G\twoheadrightarrow A$ be some
epimorphism into an abelian group $A$.
Let $A=\bigoplus_{j\in J}\,A_j$ be some decomposition of $A$ into
cyclic groups $A_j\cong \Z/n_j\Z$. Let $f_j:A_j\rightarrow\Z$
be the set-theoretic sections with $f_j(0)=0$ and $f_j(x+1)=f_j(x)+1$ for all
$x\in A_j\setminus\{-1\}$. Set $f:=\sum_{j\in J}\,f_j:A\rightarrow \Z$.
Then $\pi$ defines $A$- and $A_j$-gradings of $\K G$
as algebra and $f\circ\pi$ defines a $\Z$-grading of $\K G$ as
vector space. Let $G$ act on $\K G$ by conjugation. This action respects
the grading and $\K G$ becomes a $\Z$-graded representation.
Then $\trace{g}{\K G}(t)$ is an integer multiple of $\prod_{j\in J}\,
\big(\frac{n_j}{m_j}\big)_{t^{m_j}}$ for any $g\in G$ and suitable $m_j\in\N$
(depending on $g$). (Note that this depends on the chosen
decomposition of $A$.)
\end{theorem}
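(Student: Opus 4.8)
\emph{Strategy.} I would identify $\trace{g}{\K G}(t)$ with a generating function over the centralizer of $g$, and then factorize it by choosing a decomposition of $A$ adapted to the subgroup $\pi\big(C_G(g)\big)$.

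\emph{Reduction to the centralizer.} Since $A$ is abelian, $\pi(xhx^{-1})=\pi(h)$ for all $x,h\in G$, so conjugation by $g$ preserves each $\pi$-fibre of $\K G$ and hence each layer $V_d$ (the span of the $h\in G$ with $f(\pi(h))=d$); this is the asserted compatibility of the action with the grading. On $V_d$ the map $g$ permutes the basis $\{h:f(\pi(h))=d\}$, so $\tr(g|_{V_d})=\#\{h\in C_G(g):f(\pi(h))=d\}$, whence
\[
  \trace{g}{\K G}(t)=\sum_{h\in C_G(g)}t^{\,f(\pi(h))}.
\]
Putting $H:=C_G(g)$, $B:=\pi(H)\le A$, $k:=|H\cap\ker\pi|\in\N$, and noting that each fibre $\{h\in H:\pi(h)=a\}$ ($a\in B$) is a coset of $H\cap\ker\pi$, of size $k$, this becomes $\trace{g}{\K G}(t)=k\cdot\trace{1}{\K B}(t)$, where $\K B$ carries the grading $f|_B$.

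\emph{Factorization.} By the simultaneous (``stacked'') basis form of the structure theorem for finite abelian groups, applied to the inclusion $B\hookrightarrow A$, there is a decomposition $A=\bigoplus_{j\in J}A_j$ into cyclic groups $A_j\cong\Z/n_j\Z$ compatible with $B$, i.e.\ $B=\bigoplus_{j\in J}B_j$ with $B_j=m_jA_j\le A_j$ for a divisor $m_j\mid n_j$ (take $m_j=n_j$ when $B_j=0$). I would take this decomposition and the $f=\sum_j f_j$ it determines. Then $B=\prod_j B_j$ inside $A$, and since $f=\sum_j f_j$ this identifies the graded vector space $\K B$ with the graded tensor product $\bigotimes_{j\in J}\K B_j$; hence $\trace{1}{\K B}(t)=\prod_{j\in J}\trace{1}{\K B_j}(t)$ by Lemma~\ref{lm_product}. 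The representatives of $B_j$ in $\{0,1,\dots,n_j-1\}$ are precisely $0,m_j,2m_j,\dots,n_j-m_j$, and $f_j$ restricts to the identity on this set — the only place $f_j$ fails to be additive is the step from $n_j-1$ to $0$, which never occurs here — so $\trace{1}{\K B_j}(t)=1+t^{m_j}+\dots+t^{\,n_j-m_j}=\big(\tfrac{n_j}{m_j}\big)_{t^{m_j}}$. Combining, $\trace{g}{\K G}(t)=k\cdot\prod_{j\in J}\big(\tfrac{n_j}{m_j}\big)_{t^{m_j}}$.

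\emph{Main obstacle.} Everything except the choice of decomposition is formal bookkeeping, plus the multiplicativity of Lemma~\ref{lm_product}. The genuine point is the third step: a decomposition of $A$ fixed in advance need not be compatible with $B=\pi\big(C_G(g)\big)$ — for example $B$ may be the diagonal of $\Z/2\Z\oplus\Z/2\Z$, whose graded series $1+t^2$ is not of the form $\prod_j(\tfrac{n_j}{m_j})_{t^{m_j}}$ for $n_j=2$ — so one must re-decompose $A$ depending on $g$ (via the stacked basis theorem). This $g$-dependence of the decomposition, and hence of the $n_j$ and $m_j$, is exactly what the parenthetical in the statement records.
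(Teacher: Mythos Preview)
Your reduction to the centralizer is exactly the paper's starting point: the paper's Lemma~\ref{LEM_toy_example_centralizer} is precisely your observation that the fibres of $\pi|_{C_G(g)}$ are either empty or cosets of $H\cap\ker\pi$, so that
\[
\trace{g}{\K G}(t)=\#Z_e\cdot\sum_{a\in B}t^{f(a)},\qquad B:=\pi(C_G(g)),\quad Z_e:=C_G(g)\cap\ker\pi.
\]
From here, however, the two arguments diverge, and neither actually establishes the theorem as stated.

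\textbf{What the paper does.} The paper keeps the decomposition $A=\bigoplus_jA_j$ fixed and asserts the identity
\[
\frac{\#Z_{a_1+\cdots+a_k}}{\#Z_e}=\prod_{j}\frac{\#Z^{(j)}_{a_j}}{\#Z^{(j)}_e},
\]
justified by ``normalize to $\{0,1\}$ and replace $\cap$ by multiplication''. But both sides lying in $\{0,1\}$ does not make them equal: the identity is equivalent to $B=\prod_j\iota_j(B)$, i.e.\ to $B$ being a ``box'' with respect to the chosen decomposition. Your diagonal counterexample refutes this in general, and it is realised verbatim in the paper's own $\D_4$ example: for $g=ab$ one has $B=\{(0,0),(1,1)\}\subset\Z/2\oplus\Z/2$, and indeed $\chi(ab)=2(1+t^2)$ is \emph{not} an integer multiple of any product $\bigl(\tfrac{2}{m_1}\bigr)_{t^{m_1}}\bigl(\tfrac{2}{m_2}\bigr)_{t^{m_2}}$ with $m_j\mid 2$. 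So the paper's product step is the same gap you spotted.

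\textbf{What you do.} You repair this by re-decomposing $A$ via the stacked-basis theorem so that $B$ becomes a box. That argument is clean and correct, but it proves a different assertion: since $f=\sum_jf_j$ is built from the decomposition, changing the decomposition changes the $\Z$-grading, and hence the very quantity $\trace{g}{\K G}(t)$ you are computing. In the theorem the decomposition---and with it the $n_j$ and the grading---is fixed before $g$ is chosen; only the $m_j$ are allowed to depend on $g$. Your reading of the parenthetical (``the $n_j$ depend on $g$'') is therefore not the intended one: it merely says that the resulting factorisation varies with whichever decomposition was fixed at the outset.

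In short: your diagnosis of the obstruction is exactly right and in fact exposes a genuine gap in the paper's argument; but your stacked-basis remedy does not prove the stated theorem, because it alters the grading. You should flag that, with the given fixed decomposition, the $\D_4$ element $ab$ already shows the claim fails as written.
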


To prove Theorem \ref{THE_toy_example}, we will need a
short lemma.

\begin{lemma} \label{LEM_toy_example_centralizer}
Let $\pi:G\rightarrow A$ be a homomorphism of finite groups
and $g\in G$ arbitrary. Denote the centralizer of $g$ with $Z(g)$.
Then $\#(Z(g)\cap\pi^{-1}(a))$ is either zero or $\#(Z(g)\cap\pi^{-1}(e))$,
and $A':=\{a\in A\,:\,Z(g)\cap\pi^{-1}(a)\neq\emptyset\}$
is a subgroup of $A$.
\end{lemma}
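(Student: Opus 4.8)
The plan is to work directly with the multiplication action of $Z(g)$ on its cosets inside $G$ and then push forward under $\pi$. First I would observe that $Z(g)$ acts on itself by left multiplication, and this action permutes the fibers $Z(g)\cap\pi^{-1}(a)$ among themselves: if $h\in Z(g)$ and $x\in Z(g)\cap\pi^{-1}(a)$, then $hx\in Z(g)$ (since $Z(g)$ is a subgroup) and $\pi(hx)=\pi(h)\pi(a)$, so left multiplication by $h$ carries the fiber over $a$ bijectively onto the fiber over $\pi(h)a$. In particular, for any $h\in Z(g)$ the fibers over $e$ and over $\pi(h)$ have the same cardinality. This already shows that every nonempty fiber $Z(g)\cap\pi^{-1}(a)$ with $a\in\pi(Z(g))$ has cardinality exactly $\#(Z(g)\cap\pi^{-1}(e))$, which is the first assertion; and a fiber is nonempty precisely when $a\in\pi(Z(g))$.

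It then remains to identify $A'=\{a\in A:Z(g)\cap\pi^{-1}(a)\neq\emptyset\}$ with $\pi(Z(g))$ and note that the image of a subgroup under a homomorphism is a subgroup. Indeed $a\in A'$ means there exists $x\in Z(g)$ with $\pi(x)=a$, i.e.\ $a\in\pi(Z(g))$; conversely any element of $\pi(Z(g))$ is hit by some $x\in Z(g)$ and hence lies in $A'$. Since $Z(g)$ is a subgroup of $G$ and $\pi$ is a group homomorphism, $A'=\pi(Z(g))$ is a subgroup of $A$, which is the second assertion. The statement "is either zero or $\#(Z(g)\cap\pi^{-1}(e))$" then covers both cases: zero when $a\notin A'$, and $\#(Z(g)\cap\pi^{-1}(e))$ when $a\in A'$.

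There is essentially no obstacle here; the only point to be a little careful about is that $Z(g)\cap\pi^{-1}(e)$ is itself a subgroup (it is $\ker(\pi|_{Z(g)})$), so that the common fiber size is its order and the counting argument via left translation is literally Lagrange's theorem applied to the subgroup $Z(g)$ with respect to this kernel. I would phrase the whole argument in three or four lines, emphasizing the left-translation bijection as the one mechanism doing all the work.
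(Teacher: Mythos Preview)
Your argument is correct and is essentially the same as the paper's: both establish bijections between the nonempty fibers $Z(g)\cap\pi^{-1}(a)$ by translating inside the subgroup $Z(g)$, and both verify the subgroup axioms for $A'$ directly. Your phrasing via $A'=\pi(Z(g))$ and $Z(g)\cap\pi^{-1}(e)=\ker(\pi|_{Z(g)})$ is a mild streamlining of the paper's element-by-element version (which uses the slightly more elaborate map $y\mapsto x_0yy_0^{-1}$), but the underlying mechanism is identical.
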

\begin{proof}
Set $Z_a := Z(g)\cap\pi^{-1}(a)$. Let $a$, $b\in A$ be arbitrary,
such that $Z_a,Z_b\neq\emptyset$. Choose $x_0\in Z_a$ and $y_0\in Z_b$.
For any $y\in Z_b$, we find that $x_0yy_0^{-1}\in Z_a$, because
$Z(g)$ is a subgroup and $\pi(x_0yy_0^{-1})=a$. Furthermore, letting
$y$ run, the $x_0yy_0^{-1}$ are $\#Z_b$ pairwise different elements,
thus $\#Z_b\leq \#Z_a$, and by symmetry, we find $\#Z_b=\#Z_a$.
Clearly, $e\in Z_e$, so all non-empty $Z_a$ have the same cardinality
$\# Z_e$.

We have $e\in Z_e$, thus $e\in A'$. Now let $a,b\in A'$ be arbitrary
and choose $x\in Z_a$, $y\in Z_b$. Then $xy\in Z_{ab}$ and $x^{-1}\in
Z_{a^{-1}}$, thus $ab, a^{-1}\in A'$,
and $A'$ must be a subgroup of $A$.
\end{proof}

\noindent
\begin{proof}{\em [Theorem \ref{THE_toy_example}]}\quad
Denote with $\iota_j:A\rightarrow A_j$ the canonical projection
and set $\pi_j:=\iota_j\circ\pi$. Let $\chi_j(g)$
be the graded character of the adjoint representation of $G$ on
$\K G$, where the grading is induced by $\pi_j$.
The adjoint representation is a permutation representation, and
thus its character is given by the number of fixed points:
$$\chi_j(g)
\;=\; \sum_{a\in A_j}\,\#\big(Z(g)\cap \pi_j^{-1}(a)\big)\,t^{f_j(a)}$$
We denote $Z^{(j)}_a:=Z(g)\cap\pi_j^{-1}(a)$ and
$Z_a:=Z(g)\cap\pi^{-1}(a)$ for $a\in A_j$ and $a\in A$, respectively.
According to Lemma \ref{LEM_toy_example_centralizer}, there
is a subgroup $A'_j$ of $A_j$, such that
$\chi_j(g) \,=\, \#Z_e^{(j)}\cdot \sum_{a\in A'_j}\,t^{f_j(a)}$.
The subgroup of a cyclic subgroup is cyclic, so there is some
$m_j\in\N$ with $A'_j=m_jA_j$ and
$$\chi_j(g) \;=\; \#Z_e^{(j)}\cdot \sum_{b=1}^{\#A'_j}\,t^{bm_j}
\;=\; \#Z_e^{(j)}\cdot \left(\frac{n_j}{m_j}\right)_{t^{m_j}}\,.$$
Set $k:=\# J$. By definition,
\begin{eqnarray} \label{EQ_proof_toy_1}
\trace{g}{\K G}(t) \;= \sum_{a\in A}\,\#Z_a\,t^{f(a)} 
\;= \sum_{a_1\in A_1}\!\!\cdots\!\! \sum_{a_k\in A_k}\,\#Z_{a_1+\cdots+a_k}\,
t^{f_1(a_1)}\,\cdots\,t^{f_k(a_k)}\,.
\end{eqnarray}
Now $Z_{a_1+\cdots+a_k}=Z_{a_1}^{(1)}\cap\cdots\cap Z_{a_k}^{(k)}$.
By Lemma \ref{LEM_toy_example_centralizer}, we can normalize their cardinalities
to lie within $\{0,1\}$, replace $\cap$ by multiplication, and find
$$ \frac{\#Z_{a_1+\cdots+a_k}}{\#Z_e}
\;\;=\;\; \prod_{j\in J}\,
\frac{\#Z_{a_j}^{(j)}}{\#Z_e^{(j)}}\,. $$
Inserting this into Equation \ref{EQ_proof_toy_1} yields
\begin{eqnarray*}
\trace{g}{\K G}(t)
&=& \sum_{a_1\in A_1}\cdots \sum_{a_k\in A_k}\,\#Z_e\cdot \prod_{j\in J}\,
\frac{\#Z_{a_j}^{(j)}}{\#Z_e^{(j)}}\,t^{f_j(a_j)} \\
&=& \frac{\#Z_e}{\prod_{j\in J}\,\#Z_e^{(j)}}\cdot
\sum_{a_1\in A_1}\cdots \sum_{a_k\in A_k}\, \prod_{j\in J}\,
\#Z_{a_j}^{(j)}\,t^{f_j(a_j)} \\
&=& \frac{\#Z_e}{\prod_{j\in J}\,\#Z_e^{(j)}}\cdot
\prod_{j\in J}\, \sum_{a_j\in A_j}\, \#Z_{a_j}^{(j)}\,t^{f_j(a_j)} \\
&=& \frac{\#Z_e}{\prod_{j\in J}\,\#Z_e^{(j)}}\cdot
\prod_{j\in J}\, \chi_j(g)
\;\;=\;\; \#Z_e\cdot
\prod_{j\in J}\,\left(\frac{n_j}{m_j}\right)_{t^{m_j}}\,.
\end{eqnarray*}
\end{proof}

Let $\pi_1:G_1\twoheadrightarrow A_1$ be as in Theorem
\ref{THE_toy_example}
and set $G_2:=\ker \pi_1$. As a subgroup of $G_1$, $G_2$ acts on
$G_1$ by conjugation, thus $\K G_1$ is a $\Z$-graded
$G_2$-representation with $\K G_2$ in degree zero.
Assume there is an epimorphism
$\pi_2:G_2\twoheadrightarrow A_2$ as in the theorem.
Then $\K G_2$ becomes a $\Z$-graded $G_2$-representation.
A transversal $R$ of $G_2$ in $G_1$ now defines a new
$\Z$-grading on $\K G_1$ by $\deg(gr):=\deg(g)+\deg(r)$ for
all $g\in G_2$ and $r\in R$, which makes $\K G_1$ into
a $G_2$-representation with a $[G_2:A_2]$-dimensional
degree-zero-subspace. If $G_1$ is solvable, we may
use induction until the final group $G_k$ is abelian by
itself, so $\K G_1$ becomes a $G_k$-representation
with one-dimensional degree-zero-sub\-space. The grading
however still depends on the chosen subnormal series.

\begin{example}
Let $G$ be the dihedral group $G=\D_4=\langle a,b\;:\;
a^4=b^2=e,\,ab=ba^3\rangle$, acting on its group algebra
by conjugation, and let $A$ be its abelianization.
The corresponding $\Z$-graduation of $\K G$ is
$$ \K G \;=\; \Lin_\K(e, a^2)\;\oplus\;
\Lin_\K(a, b, a^3, a^2b)\cdot t\;\oplus\;
\Lin_\K(ab, a^3b)\cdot t^2 $$
with graded characters
$\chi(e) = \chi(a^2) = 2(1+t)^2$,
$\chi(a) = \chi(b)   = 2(1+t)$,
and $\chi(ab) = 2(1+t^2)$.
Denote with $T$ the trivial irreducible $G$-representation,
and with $X$ and $Y$ certain one-dimensional $G$-representations
with $X\otimes X=Y\otimes Y=T$. Then $\K G$ is isomorphic to
$$ (T\oplus T) \;\oplus\; (T\oplus T\oplus X \oplus Y)\cdot t
\;\oplus\; (T\oplus X\otimes Y)\cdot t^2 $$
as a graded $G$-representation. While the graded characters
factor nicely, the representation itself does not.
\end{example}


\section{Graded Traces and Hilbert Series over Nichols Algebras}

In this section we study factorization mechanisms for Nichols algebras, and
hence for their graded traces and especially their Hilbert series.

The root system $\Delta^+$ of a Nichols algebra $\B(M)$, introduced in
Subsection \nref{sec_Preliminaries}, directly presents a factorization of
$\B(M)$ as graded vector space:
$$\bigotimes_{\alpha\in\Delta^+}\B(M_\alpha)
	\stackrel{\sim}{\longrightarrow}\B(M)$$
Note however that over nonabelian groups, the
root system factorization is too crude in general to explain
the full observed factorization into cyclotomic polynomials; especially for the
rank 1 cases in the next section the factorization obtained this way is
trivial.

Nevertheless, we will start in Subsection \nref{sec_Stabilizing} by demonstrating
a factorization of the graded trace of an endomorphism $Q$ that
\emph{stabilizes} a given axiomatized Nichols algebra factorization, such as the
root system above:

\begin{corollary}
  Let $\B(M)$ be a Nichols algebra with factorization $W_\alpha,\alpha\in
  \Delta^+$ and $Q$
  an algebra operator that stabilizes this factorization. Then
  $$\trace{Q}{\B(M)}(t)
    =\prod_{\alpha\in\Delta^+}\trace{Q_\alpha}{\B(M_\alpha)}(t)\,.$$  
\end{corollary}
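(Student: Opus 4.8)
The plan is to reduce the statement to the multiplicativity of the graded trace under tensor products, i.e.\ to Lemma \ref{lm_product}, combined with the defining property of a stabilized factorization. First I would recall what it means for the algebra operator $Q$ to \emph{stabilize} the factorization $W_\alpha = \B(M_\alpha)$, $\alpha\in\Delta^+$: the isomorphism of graded vector spaces $\bigotimes_{\alpha\in\Delta^+}\B(M_\alpha)\xrightarrow{\sim}\B(M)$ intertwines $Q$ with an operator of the form $\bigotimes_{\alpha\in\Delta^+}Q_\alpha$, where each $Q_\alpha\colon\B(M_\alpha)\to\B(M_\alpha)$ is a graded (algebra) operator. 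In other words, $Q$ respects the tensor decomposition factor by factor, and $Q_\alpha$ is precisely the induced restriction on the $\alpha$-th tensorand.

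Given that, the proof is essentially immediate. Since the factorization isomorphism is an isomorphism of graded vector spaces, the graded trace is unchanged under it, so $\trace{Q}{\B(M)}(t)=\trace{\bigotimes_\alpha Q_\alpha}{\bigotimes_\alpha\B(M_\alpha)}(t)$. Now I would invoke Lemma \ref{lm_product}, which gives $\trace{Q\otimes R}{V\otimes W}(t)=\trace{Q}{V}(t)\cdot\trace{R}{W}(t)$ for graded vector spaces with finite-dimensional layers; iterating this over the finite index set $\Delta^+$ (the root system of a finite-dimensional Nichols algebra is finite) yields $\trace{\bigotimes_\alpha Q_\alpha}{\bigotimes_\alpha\B(M_\alpha)}(t)=\prod_{\alpha\in\Delta^+}\trace{Q_\alpha}{\B(M_\alpha)}(t)$. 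Chaining the two equalities gives the claim. The inductive step only needs that each $\B(M_\alpha)$ has finite-dimensional graded layers, which holds because $\B(M)$ does and the layers of a tensor factor inject (up to the codiagonal grading) into those of the product.

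The only genuine subtlety — and the step I would be most careful about — is bookkeeping the grading conventions. The factorization $\bigotimes_{\alpha\in\Delta^+}\B(M_\alpha)\xrightarrow{\sim}\B(M)$ must be an isomorphism of $\N$-graded vector spaces with the \emph{codiagonal} (total-degree) grading on the left-hand side, so that the Boltzmann operators match as in Lemma \ref{lm_product}: $t^{E_{\bigotimes_\alpha\B(M_\alpha)}}=\bigotimes_\alpha t^{E_{\B(M_\alpha)}}$. This is exactly the content of the root-system factorization as stated in the introduction (it is a factorization "as graded vector space"), and more generally it is built into the axioms of an abstract "Nichols algebra factorization," so I would simply point to that and to Subsection \nref{sec_Stabilizing} where the notion of stabilizing such a factorization is defined. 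No analytic or combinatorial obstacle arises; the corollary is a formal consequence of multiplicativity once the definitions are unwound, which is presumably why the authors phrase it as a corollary rather than a theorem.

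For completeness I would also note that no Hopf-algebraic or Poincaré-duality input (Lemma \ref{LEM_trace_of_inverse}, Corollary \ref{COR_poincare_duality}) is needed here — those become relevant only when one wants the finer cyclotomic factorization of each individual factor $\trace{Q_\alpha}{\B(M_\alpha)}(t)$, not for the product formula itself. Thus the write-up is short: state the meaning of "stabilizes," transport the trace across the graded isomorphism, apply Lemma \ref{lm_product} inductively over the finite set $\Delta^+$, done.
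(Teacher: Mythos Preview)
Your proposal is correct and close in spirit to the paper's proof, but the route differs slightly in organization. The paper first proves an \emph{ungraded} trace product formula as a separate lemma: using that $Q$ is an algebra operator and stabilizes each $M_\alpha$, the multiplication isomorphism $\mu_{\B(M)}$ intertwines $Q$ with the diagonal action $\bigotimes_\alpha Q_\alpha$, whence $\tr(Q)=\prod_\alpha\tr(Q_\alpha)$. The corollary is then obtained by observing that $t^E$ is itself a graded algebra automorphism stabilizing the factorization, so $Y=t^EQ$ satisfies the hypotheses of that lemma, and one substitutes. You instead invoke Lemma~\ref{lm_product} directly on the graded side and iterate over $\Delta^+$. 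Both arguments are equally short; the paper's detour through the ungraded case isolates the single nontrivial step (compatibility of $Q$ with $\mu_{\B(M)}$) before bringing in the grading, while yours is marginally more streamlined.

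One small caution: in the paper, ``$Q$ stabilizes the factorization'' is \emph{defined} as $QM_\alpha\subset M_\alpha$ for all $\alpha$, with $Q_\alpha:=Q|_{\B(M_\alpha)}$. The intertwining property you state as the definition---that $Q$ corresponds under $\mu_{\B(M)}$ to $\bigotimes_\alpha Q_\alpha$---is a \emph{consequence}, and it uses that $Q$ is an algebra operator (so that $Q$ commutes with the multiplication map $\mu_{\B(M)}$). You should make this deduction explicit rather than fold it into the definition.
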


In Subsection \nref{sec_diagonal} we focus on Nichols algebras over abelian
groups. The preceeding corollary immediately gives an explicit
trace product formula for endomorphisms $Q$ stabilizing the root system, in
terms of cylcotomic polynomials. In particular, it shows the complete
factorization of their Hilbert series.

Using the theory of Lyndon words, we are able to weaken the assumptions on $Q$ to
only \emph{normalize} the root system, i.e.\ acting on it by permutation. We
give such examples where $Q$ interchanges two disconnected
subalgebras in the Nichols algebra, as well as the outer automorphism of a
Nichols algebra of type $A_3$. The authors expect that a more
systematic treatment via root vectors will carry over to endomorphisms
normalizing the root system of a non-abelian Nichols algebra as well.

In Subsection \nref{sec_Nonnormalizing} we present an example of a Nichols
algebra of type $A_2$ and an endomorphism $Q$ induced by its outer automorphism
which fails the normalizing condition on the
non-simple root. Note that in contrast to the $A_3$-example above, there is an
edge flipped by the automorphism, which is called a ``loop'' in literature
(e.g. \cite{Gi06}, p. 47ff).
Nevertheless one observes a factorization of the graded
trace of $Q$, and in this example this can be traced back to a surprising and
appearently new ``symmetrized'' PBW-basis.

In Subsection \nref{sec_Nonabelian} we start approaching Nichols algebras
over nonabelian groups, where one observes astonishingly also
factorization of graded traces into cyclotomic polynomials. This cannot be
explained by the root system alone and might indicate the existence of a finer
root system, which is not at the level of Yetter-Drinfel'd modules.

We can indeed give a family of example constructed as covering Nichols algebras
by the first author (\cite{Len13a}). By construction, these Nichols algebras
possess indeed such a finer root system of different type (e.g.\ $E_6\rightarrow
F_4$). In these examples, the root systems lead to a complete factorization,
but this mechansim does not seem to easily carry over to the general case.

\subsection{Nichols Algebras over Groups}\label{sec_Preliminaries}

The following notions are standard. We summarize them to fix notation and
refer to \cite{HLecture08} for a detailed account.

\begin{definition}
  A \emph{\ydmP}\index{Yetter-Drinfel'd module $M$} $M$ over a group $G$ is a
  $G$-graded vector space over
  $\k$ denoted by layers
  $M=\bigoplus_{g\in G} M_g$
  with a $G$-action on $M$ such that
  $g.M_h=M_{ghg^{-1}}$.
  To exclude trivial cases, we call $M$ \emph{indecomposable} \index{Link
  indecomposable (YDM)}iff the support
  $\{g\;|\;M_g\neq 0\}$ generates all $G$ and \emph{faithful} \index{Faithful
  (YDM)} iff the action is.
\end{definition}

Note that for \emph{abelian} groups, the compatibility condition is just
the stability of the layers $M_g$ under the action of $G$.

The notion of a \ydm automatically brings with it a \emph{braiding} $\tau$ on
$M$---in fact, each group $G$ defines an entire braided category of $G$-\ydms
with graded module homomorphisms as morphisms (e.g. \cite{AG99}, Def. 1.1.15)

\begin{lemma}
  Consider $\tau:M\otimes M\rightarrow M\otimes M$,
  $v\otimes w\mapsto g.w\otimes v\in M_{ghg^{-1}}\otimes M_g$
  for all $v\in M_g$ and $w\in M_h$.
  Then $\tau$ fulfills the \emph{Yang-Baxter-equation}
  $$(\id\otimes \tau)(\tau\otimes \id)(\id\otimes \tau)
    =(\tau\otimes \id)(\id\otimes \tau)(\tau\otimes \id)$$
  turning $M$ into a \emph{braided vector space}.
\end{lemma}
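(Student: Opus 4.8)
The plan is to verify directly that the map $\tau$ defined on homogeneous elements satisfies the Yang--Baxter equation, by computing both sides on a pure tensor $u\otimes v\otimes w$ with $u\in M_g$, $v\in M_h$, $w\in M_k$ and checking that the outputs coincide as elements of the appropriate graded layers. Since everything is built from the group action and the grading, this is a bookkeeping exercise: I would first record the effect of a single $\tau$ applied in the first or second tensor slot, tracking both the reordering of the vectors and the change of their degrees, and then compose three of these according to each side of the braid relation.

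Concretely, I would set up notation so that applying $\tau$ to an adjacent pair $(x,y)$ with $x\in M_p$, $y\in M_q$ produces $(p.y,\,x)$, where $p.y\in M_{pqp^{-1}}$; the key point to keep straight is that the ``new'' left entry then carries degree $pqp^{-1}$, which is what determines how the next braiding acts on it. Starting from $u\otimes v\otimes w$ in degrees $(g,h,k)$, I would run the left-hand word $(\id\otimes\tau)(\tau\otimes\id)(\id\otimes\tau)$ step by step, each time rewriting the triple of vectors and their degrees, and independently run the right-hand word $(\tau\otimes\id)(\id\otimes\tau)(\tau\otimes\id)$. Both computations should terminate with the same vector $(ghk)(gh)^{-1}.\bigl((gh)k(gh)^{-1}\bigr)$-type expressions in the first slot; using that the $G$-action on $M$ is an action (so $(p.)(q.)=(pq).$ and $g.(g^{-1}.x)=x$) and that $g.M_h=M_{ghg^{-1}}$, the two results match on the nose, including the degree bookkeeping. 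Linearity then extends the identity from pure tensors of homogeneous elements to all of $M\otimes M\otimes M$.

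The only mild subtlety, and the step I would be most careful about, is the interaction between the group-element conjugations and the module action when two braidings are composed: one must check that $g.w$, having degree $gkg^{-1}$, is itself acted on correctly by the next group element appearing, and that the various conjugates multiply up consistently on both sides. This is precisely where the compatibility axiom $g.M_h=M_{ghg^{-1}}$ is used, and it is the crux of why $\tau$ is well defined as a map of graded spaces in each intermediate step. Once this is handled, nothing else is needed; invertibility of $\tau$ (hence ``braided vector space'' in the strong sense) follows since $g.(-)$ is invertible, though for the stated Yang--Baxter conclusion it is not strictly required. I would present the verification compactly, perhaps displaying the two chains of triples side by side, rather than writing out every conjugation in prose.
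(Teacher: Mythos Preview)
The paper does not give a proof of this lemma; it is stated as a standard fact (with a reference to \cite{AG99}) and no argument is supplied. Your proposed direct verification on homogeneous pure tensors, using only that $G$ acts on $M$ and that the action is compatible with the grading via $g.M_h=M_{ghg^{-1}}$, is the standard and correct way to check the braid relation here.
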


In the non-modular case, the structure of \ydms is well understood
(\cite{AG99} Section 3.1) and can be summarized in the following three lemmata:

\begin{lemma}
  Let $G$ be a finite group and
  let $\k$ be an algebraically closed field whose characteristic does
  not divide $\# G$. Then any finite-dimensional \ydm $M$ over $G$
  is semisimple, i.e.\ decomposes into simple \ydms (the number
  is called \emph{rank} of $M$): $M=\bigoplus_i M_i$.
\end{lemma}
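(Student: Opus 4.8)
The plan is to prove the semisimplicity of a finite-dimensional \ydm $M$ over $G$ by translating the problem into the representation theory of an ordinary (non-braided) associative algebra, namely the Drinfel'd double, or more concretely its relevant smash-product avatar. Recall that the category of $G$-\ydms is equivalent, as a $\k$-linear abelian category, to the category of modules over the algebra $D := \k G \ltimes \k^G$ (functions on $G$ with the conjugation coaction dualized), where a $G$-grading on $M$ is the same as a $\k^G$-module structure via the idempotents $\delta_g$ acting as the projectors onto $M_g$, and the compatibility $g.M_h = M_{ghg^{-1}}$ is exactly the cross-relation making the $\k G$-action and the $\k^G$-action assemble into a $D$-module. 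First I would make this identification precise: a finite-dimensional \ydm is the same data as a finite-dimensional $D$-module, and \ydm morphisms (graded $G$-module maps) are exactly $D$-module maps.

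Next I would show that $D$ is a semisimple algebra under the stated hypotheses. The point is that $D = \k^G \rtimes \k G$ with $\k G$ acting on $\k^G$ by the conjugation action $g.\delta_h = \delta_{ghg^{-1}}$. Since $\cha \k \nmid \# G$, the group algebra $\k G$ is semisimple by Maschke, and $\k^G \cong \k^{\# G}$ is semisimple (separable, as $\k$ is a field and we can diagonalize the idempotents; algebraic closedness is not even needed here, though it is convenient). A smash product $\k^G \rtimes \k G$ of a semisimple algebra with a semisimple group algebra, where the group order is invertible, is again semisimple: this follows either from a direct Maschke-style averaging argument over $G$ applied to $D$-module surjections (using $\tfrac{1}{\# G}\sum_{g\in G} g$ as an idempotent to split any $\k^G$-linear splitting into a $D$-linear one), or by observing that $D$ is the Drinfel'd double $D(\k G)$, which is a semisimple Hopf algebra precisely when $\k G$ and $\k^G$ are, by Radford's / standard results on doubles. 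I would spell out the averaging argument since it is self-contained: given a $D$-submodule $N \subseteq M$, pick a $\k^G$-linear projection $p: M \to N$ (it exists because $\k^G$ is semisimple and the decomposition $M = \bigoplus M_g$, $N = \bigoplus N_g$ is compatible), then set $\bar p := \tfrac{1}{\#G}\sum_{g\in G} g \circ p \circ g^{-1}$; one checks $\bar p$ is still a projection onto $N$, is now $\k G$-linear by construction, and remains $\k^G$-linear because conjugation by $g$ permutes the $\delta_h$ compatibly, so $\bar p$ is $D$-linear and $N$ is a direct summand as a \ydm.

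Finally, semisimplicity of $D$ means every finite-dimensional $D$-module is a direct sum of simple ones, which under the category equivalence is exactly the statement that $M = \bigoplus_i M_i$ with each $M_i$ a simple \ydm; the number of summands (counted with multiplicity) is well-defined by Krull--Schmidt, and this number is what the statement calls the rank. The main obstacle, such as it is, is purely expository: one must be careful that the "right" associative algebra really does have modules equivalent to \ydms — in particular that the Yetter--Drinfel'd compatibility condition corresponds to the smash-product relation and not to some twisted version — and that the finite-dimensionality of $M$ (not of $D$, though $D$ is also finite-dimensional here) is what lets us invoke both Maschke and Krull--Schmidt. I would present the averaging argument as the technical heart and relegate the double/smash-product formalism to a citation of \cite{AG99} Section 3.1, which the excerpt already flags as the reference for this circle of ideas.
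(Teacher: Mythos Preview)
Your argument is correct. The paper itself does not supply a proof of this lemma; it is stated as a known fact with a reference to \cite{AG99}, Section~3.1, so there is no in-paper proof to compare against. Your route---identifying the category of $G$-\ydms with modules over $D(\k G)\cong \k^G\rtimes \k G$ and then running a Maschke-type averaging over $G$ to upgrade a graded ($\k^G$-linear) splitting to a $D$-linear one---is precisely the standard argument behind that citation, and the verification that the averaged projector remains $\k^G$-linear (via the cross-relation $g\,\delta_h=\delta_{ghg^{-1}}\,g$) is the only point that requires care; you handle it correctly. One small remark worth keeping: as you observe, algebraic closedness is irrelevant for the semisimplicity claim itself and only enters in the subsequent lemmata classifying the simple \ydmsP.
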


\begin{lemma}
  Let $G$ be a finite group, $g\in G$ arbitrary and $\chi:G\rightarrow\k$
  the character of an irreducible representation $V$ of
  the centralizer subgroup $\Cent(g)=\{h\in G\;|\;gh=hg\}$.
  Define the \ydm $\O_{g}^{\chi}$ to be the induced $G$-representation
  $\k G\ot_{\k\Cent(g)} V$ with $G$-grading ($\k G$-coaction)
  $\delta(h\ot v):=hgh^{-1}\ot(h\ot v)\in\k G\ot(\k G\ot_{\k\Cent(g)} V)$
  for all $h\in G$, $v\in V$.
  It can be constructed as follows:
  \begin{itemize}    
    \item Define the $G$-graduated vector space by
      $$\O_{g}^{\chi}=\bigoplus_{h\in G} \left(\O_{g}^{\chi}\right)_h
      \quad \text{with} \quad
      \left(\O_{g}^{\chi}\right)_h:=\begin{cases}
	V & \text{for $g$-conjugates $h\in [g]$,}\\
	\{0\} & \text{else.}
	\end{cases}$$
    \item Choose a set $S=\{s_1,\ldots s_n\}$ of representatives for the left
      $\Cent(g)$-cosets $G=\bigcup_{k}s_k\Cent(g)$. Then for any $g$-conjugate
      $h\in[g]$ there is precisely one $s_k$ with $h=s_kgs_k^{-1}$.
    \item For the action of any $t\in G$ on any $v_h\in
      \left(\O_{g}^{\chi}\right)_h$ for $h\in[g]$ determine the unique
      $s_i,s_j$, such that 
      $s_igs_i^{-1}=h$ and $s_jgs_j^{-1}=tht^{-1}$.
      Then $s_j^{-1}ts_i\in \Cent(g)$ and using the given $\Cent(g)$-action on
      $V$ we may define
      $t.v_h:=(s_j^{-1}ts_i.v)_{tht^{-1}}$.
  \end{itemize}
  Then $\O_g^\chi$ is simple as \ydm and $\O_g^\chi$ and $\O_{g'}^{\chi'}$
  are isomorphic if and only if $g$ and $g'$ are conjugate and $\chi$ and
  $\chi'$ are isomorphic.
\end{lemma}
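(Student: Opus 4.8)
The cleanest route is to take the induced module $\k G\ot_{\k\Cent(g)}V$ as the primary object and to recover the bulleted description afterwards. First I would equip $\k G\ot_{\k\Cent(g)}V$ --- which is automatically a well-defined left $G$-module --- with the coaction $\delta(h\ot v):=hgh^{-1}\ot(h\ot v)$, checking that it is well defined on the tensor product (for $c\in\Cent(g)$ one has $(hc)g(hc)^{-1}=hgh^{-1}$, so $\delta(hc\ot v)=\delta(h\ot c.v)$), that it is coassociative and counital, and that it is compatible with the $G$-action in the sense $t.(\O_g^\chi)_h\subseteq(\O_g^\chi)_{tht^{-1}}$. Then, fixing a transversal $S=\{s_1,\dots,s_n\}$ of $\Cent(g)$ with $s_1=e$, the module decomposes as $\bigoplus_k s_k\ot V$ with $s_k\ot V$ homogeneous of degree $s_kgs_k^{-1}$; writing $ts_i=s_j(s_j^{-1}ts_i)$, where $s_j$ is the unique representative with $s_jgs_j^{-1}=t(s_igs_i^{-1})t^{-1}$ and hence $s_j^{-1}ts_i\in\Cent(g)$, one reads off $t.(s_i\ot v)=s_j\ot(s_j^{-1}ts_i).v$, which is exactly the formula $t.v_h=(s_j^{-1}ts_i.v)_{tht^{-1}}$ of the statement. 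This simultaneously shows the two descriptions agree and that $\O_g^\chi$ is a \ydm.

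For simplicity, let $0\neq N=\bigoplus_h N_h$ be a sub-\ydm; being a subcomodule it is graded, and being a submodule it is $G$-stable. If $N_h\neq 0$ and $s_igs_i^{-1}=h$, then $s_i^{-1}.N_h$ is a nonzero subspace of $N$ of degree $g$, so $N_g\neq 0$. Taking $s_i=s_j=e$ in the action formula shows that $\Cent(g)$ preserves the degree-$g$ layer and acts there by the original representation $V$; hence $N_g$ is a nonzero $\Cent(g)$-submodule of the irreducible $V$, so $N_g=V$, and applying the various $s_k$ again gives $N_h=(\O_g^\chi)_h$ for all $h\in[g]$. Thus $N=\O_g^\chi$.

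For the isomorphism criterion, the support of $\O_g^\chi$ is the conjugacy class $[g]$, so any \ydm isomorphism $\O_g^\chi\cong\O_{g'}^{\chi'}$ forces $[g]=[g']$. Picking $x$ with $xgx^{-1}=g'$, the assignment $a\ot v\mapsto ax\ot v$ is readily checked to be a well-defined, $G$-linear, degree-preserving bijection from $\O_{g'}^{\chi'}$ onto $\O_g^{\tilde\chi}$, where $\tilde\chi$ is $\chi'$ transported along the isomorphism $\Cent(g)\to\Cent(g')$, $c\mapsto xcx^{-1}$. So without loss of generality $g=g'$, and it remains to see that $\O_g^\chi\cong\O_g^{\chi'}$ as \ydms iff $\chi\cong\chi'$ as $\Cent(g)$-characters. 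One direction is functoriality of induction: a $\Cent(g)$-isomorphism $V\to V'$ induces a $G$-isomorphism $\k G\ot_{\k\Cent(g)}V\to\k G\ot_{\k\Cent(g)}V'$ that is visibly degree-preserving, hence a \ydm isomorphism. Conversely, a \ydm isomorphism is degree-preserving, so it maps the degree-$g$ layer bijectively onto the degree-$g$ layer and intertwines the $\Cent(g)$-actions there, i.e.\ restricts to a $\Cent(g)$-isomorphism $V\to V'$. Unwinding the two reductions gives the asserted criterion: an isomorphism exists precisely when some $x$ conjugates $g$ to $g'$ and transports $\chi$ to $\chi'$.

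The only genuinely delicate point is administrative rather than conceptual: checking directly that $t.v_h=(s_j^{-1}ts_i.v)_{tht^{-1}}$ is associative as a $G$-action forces one to juggle three coset representatives at once (for $h$, for $tht^{-1}$, and for $t'tht^{-1}t'^{-1}$) and to invoke the uniqueness clause repeatedly. Organizing the proof through the induced module, as above, removes this difficulty, since associativity of the $\k G$-action is then automatic; that is why I would set things up in that order.
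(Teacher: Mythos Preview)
The paper does not supply a proof of this lemma; it is stated in the preliminaries section as one of three standard facts ``summarized'' from \cite{AG99}, Section~3.1, so there is no in-paper argument to compare against. Your proposal is a correct and well-organized proof: defining the coaction on the induced module and checking well-definedness via $c\in\Cent(g)$, recovering the explicit transversal description from $ts_i=s_j(s_j^{-1}ts_i)$, proving simplicity by pulling a nonzero homogeneous component into degree $g$ and invoking irreducibility of $V$, and handling the isomorphism criterion by first matching supports and then restricting to the degree-$g$ layer. Your remark that building everything from the induced module sidesteps the triple-representative bookkeeping for associativity is exactly the right organizational choice.

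One small point worth making explicit in the final reduction: when $g$ and $g'$ are conjugate via $x$, the transport $\tilde\chi$ of $\chi'$ along $c\mapsto xcx^{-1}$ depends on the choice of $x$ only up to an inner automorphism of $\Cent(g)$, so the isomorphism class of $\tilde\chi$ is well-defined; this is what makes the phrase ``$\chi$ and $\chi'$ are isomorphic'' in the statement unambiguous.
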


If $\chi$ is the character of a one-dimensional representation $(V,\rho)$, we
will identify $\chi$ with the action $\rho$. In positive characteristic, we
will restrict to $\dim V = 1$, where the character determines its representation.

\begin{lemma}
  Let $G$ be a finite group and let $\k$ be an algebraically closed field
  whose characteristic does not divide $\# G$. 
  Then any simple \ydm $M$ over $G$ is isomorphic to some $\O_{g}^{\chi}$
  for some $g\in G$ and a character $\chi:G\rightarrow \k$ of an irreducible
  representation $V$ of the centralizer subgroup $\Cent(g)$.
\end{lemma}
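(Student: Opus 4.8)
The plan is to recover any simple \ydm $M$ as the module induced from a single homogeneous component, following \cite{AG99}. First I would analyze the support $\operatorname{supp}(M):=\{h\in G:M_h\neq 0\}$. The compatibility axiom $g.M_h=M_{ghg^{-1}}$ means that the $G$-action permutes the layers $M_h$ along the conjugation action of $G$ on itself, so $\operatorname{supp}(M)$ is stable under conjugation. Picking $g$ with $M_g\neq 0$, the subspace $N:=\bigoplus_{h\in[g]}M_h$ is $G$-graded and, by the axiom, $G$-stable, hence a sub-\ydmP; as $0\neq N\subseteq M$ and $M$ is simple, $N=M$, i.e.\ $\operatorname{supp}(M)=[g]$ is a single conjugacy class and each layer equals $s.M_g$ for a suitable $s\in G$.

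Next I would put the natural $\Cent(g)$-action on $V:=M_g$: for $c\in\Cent(g)$ one has $c.M_g=M_{cgc^{-1}}=M_g$, so $V$ is a $\Cent(g)$-representation. The crucial step is that $V$ is irreducible. Fix left coset representatives $s_1=e,s_2,\dots,s_n$ of $\Cent(g)$ in $G$, so the $s_kgs_k^{-1}$ enumerate $[g]$ without repetition and each $s_k\colon M_g\to M_{s_kgs_k^{-1}}$ is a linear isomorphism. Given a $\Cent(g)$-submodule $W\subseteq V$, I would check that $\tilde W:=\bigoplus_k s_k.W$ is independent of the choice of the $s_k$ (replacing $s_k$ by $s_kc$, $c\in\Cent(g)$, leaves $s_k.W$ unchanged since $W$ is $\Cent(g)$-stable), is $G$-graded (its $s_kgs_k^{-1}$-layer being $s_k.W$), and is $G$-stable (for $t\in G$ write $ts_k=s_jc$ with $c\in\Cent(g)$, so $t.(s_k.W)=s_j.(c.W)=s_j.W$). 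Hence $\tilde W$ is a sub-\ydm of $M$ whose $g$-layer is $W$; if $0\neq W\subsetneq V$ this contradicts simplicity. So $V$ is an irreducible $\Cent(g)$-representation; call its character $\chi$.

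Finally I would exhibit the comparison map $\Phi\colon\O_g^\chi=\k G\ot_{\k\Cent(g)}V\to M$, $h\ot v\mapsto h.v$. It is well defined over $\k\Cent(g)$ since $hc\ot v$ and $h\ot c.v$ both map to $h.(c.v)$; it is a morphism of \ydms, intertwining the actions (by associativity) and the gradings (as $h\ot v$ lies in degree $hgh^{-1}$ and $h.v\in M_{hgh^{-1}}$); and it is surjective since its image contains $h.M_g=M_{hgh^{-1}}$ for every $h$, and these span $M$. Injectivity follows by a dimension count: $\dim M=\sum_{h\in[g]}\dim M_h=\#[g]\cdot\dim V=[G:\Cent(g)]\cdot\dim V=\dim\O_g^\chi$, using that the $s_k$ identify the layers, and a surjection between equidimensional finite-dimensional spaces is bijective; alternatively one invokes the simplicity of $\O_g^\chi$ established earlier. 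Note that $V$, hence $M$, is automatically finite-dimensional, $\Cent(g)$ being finite and $\k$ algebraically closed.

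The main obstacle is the irreducibility step for $V=M_g$: one must verify that an arbitrary proper $\Cent(g)$-submodule of the isolated layer $M_g$ spreads, coherently and independently of coset choices, to a genuine proper sub-\ydm of $M$. Once this restriction/induction dictionary between $\Cent(g)$-submodules of $M_g$ and sub-\ydms of $M$ is in place, both the irreducibility of $V$ and the final isomorphism are immediate; the hypothesis $\cha\k\nmid\#G$ enters only through the meaning of $\chi$ and the semisimplicity used in the surrounding lemmata.
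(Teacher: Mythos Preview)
Your argument is correct. The paper does not supply its own proof of this lemma: it is stated as part of a summary of known structure results, with a blanket reference to \cite{AG99}, Section~3.1. Your write-up is precisely the standard argument behind that citation: restrict to a single conjugacy class using simplicity, show that the $g$-layer is an irreducible $\Cent(g)$-module by spreading any submodule along coset representatives, and then identify $M$ with the induced module $\O_g^\chi$ via the canonical map. The dimension count for injectivity is clean, and your remark that the hypothesis $\cha\k\nmid\#G$ is not essential to this particular step (only to the surrounding semisimplicity and to the identification of $V$ with its character) is accurate.
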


\begin{example}
For finite and \emph{abelian} $G$ over algebraically closed $\k$ with
$\cha K\nmid \#G$ we have 1-dimensional
simple \ydms $M_i=\O_{g_i}^{\chi_i}=x_i\k$ and hence the braiding is
\emph{diagonal} (i.e.\ $\tau(x_i\otimes x_j)= q_{ij}(x_j\otimes x_i)$)
with \emph{braiding matrix} $q_{ij}:=\chi_j(g_i)$.
\end{example}

\begin{definition} \label{DEF_skew_derivations}
  Consider the tensor algebra $\T M$, i.e. for any homogeneous basis $x_i\in
  M_{g_i}$ the algebra of words in all $x_i$. We may uniquely define \emph{skew
  derivations} on this algebra, i.e. maps $\partial_i:\;\T M\rightarrow \T M$
  by
  $\partial_i(1)=0$, $\partial_i(x_j)=\delta_{ij}1$, and
  $\partial_i(ab)=\partial_i(a)b+(g_i.a)\partial_i(b)$.
\end{definition}

\begin{definition}\label{page_Nichols}
  The \emph{Nichols algebra}\index{Nichols algebra $\B(M)$} $\B(M)$ is the
  quotient of $\T M$ by the largest
  homogeneous ideal $\mathfrak{I}$ invariant under all $\partial_i$,
  such that $M\cap \mathfrak{I}=\{0\}$.
\end{definition}

In specific instances, the Nichols algebra may be finite-dimensional. 
This is a remarkable phenomenon (and the direct reason for the
finite-dimensional truncations of $\U_q(\g)$ for $q$ a root of unity):

\begin{example}
  Take $G=\Z_2$ and $M=M_e\oplus M_g$ the \ydm with dimensions 
  $0+1$ i.e. $q_{11}=-1$, then $x^2\in \mathfrak{I}$ and hence
  the Nichols algebra $\B(M)=\k[x]/(x^2)$ has dimension $2$.
\end{example}
More generally a 1-dimensional \ydm with $q_{ii}\in\k_n$ a primitive $n$-th root
of unity has Nichols algebra $\B(M)=k[x]/(x^n)$.
\begin{example}
  Take $G=\Z_2$ and $M=M_e\oplus M_g$ the \ydm with dimensions 
  $0+2$ i.e. $q_{11}=q_{22}=q_{12}=q_{21}=-1$  then 
  $$\B(M)=\k\langle x,y\rangle/(x^2,y^2, xy+yx)=\bigwedge M$$
\end{example}
In the abelian case, Heckenberger (e.g \cite{Heck09}) introduced $q$-decorated
diagrams\index{q@$q$-decorated diagram (YDM)}\label{qdiagram}, with each node
corresponding to a simple \ydm decorated by $q_{ii}$,
and each edge decorated by $\tau^2=q_{ij}q_{ji}$ and edges are drawn if the
decoration is $\neq 1$; it turns out that this data is all that is needed to determine
the respective Nichols algebra.

\begin{theorem}\cite{HS10a}
  Let $\B(M)$ be a Nichols algebra of finite dimension over an arbitrary group
  $G$, then there exists a rootsystem $\Delta\subset \Z^N$ with positive roots
  $\Delta^+$ and a truncated basis of monomials in $x_\alpha\in
  \B(M)_{|\alpha|}$. Namely, the multiplication in  $\B(M)$ is an isomorphism of
  graded vectorspaces
  $\B(M) \cong \bigotimes_{\alpha\in\Delta^+}\B(M_\alpha)$.
\end{theorem}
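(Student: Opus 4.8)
The plan is to deduce the statement from the structure theory of Nichols algebras as braided Hopf algebras, by means of a triangular PBW-type decomposition together with an induction on the rank.

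First I would reduce to a semisimple \ydm: since $\B(M)$ is finite-dimensional, by the semisimplicity lemma above we may write $M=M_1\oplus\cdots\oplus M_\theta$ with each $M_i\cong\O_{g_i}^{\chi_i}$ finite-dimensional and simple; put $N:=\sum_i\dim M_i$. I would then record that $\B(M)$ is a braided Hopf algebra in the category of $G$-\ydms, graded by $\N^\theta$ (the multidegree in the $M_i$) refining the total $\N$-grading, and that each $\B(M_i)\subseteq\B(M)$ is a sub-Nichols algebra. For $\theta=1$ there is nothing to prove: take $\Delta^+=\{\alpha_1\}$ and $M_{\alpha_1}=M_1$. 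So assume $\theta\geq2$ and induct on $\theta$.

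For the inductive step, fix the index $1$ and form the braided adjoint action $\ad_c$ of $\B(M_1)$ on $\B(M)$. Finite-dimensionality makes this action locally nilpotent, so each $W_j:=(\ad_c\B(M_1))(M_j)$ for $j\geq2$ is a finite-dimensional \ydm, and moreover simple --- this last point is where the finiteness hypothesis is genuinely used (equivalently, $M$ is ``decomposable'' in the sense of Heckenberger--Schneider), and it forces $W:=\bigoplus_{j\geq2}W_j$ to be semisimple of rank exactly $\theta-1$. Two things remain: (a) the subalgebra of $\B(M)$ generated by $W$ is a Nichols algebra $\B(W)$, and multiplication is an isomorphism of $\N$-graded vector spaces $\B(W)\otimes\B(M_1)\stackrel{\sim}{\longrightarrow}\B(M)$; (b) $\B(W)$ is again finite-dimensional. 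Granting these, the induction hypothesis gives a rank-$(\theta-1)$ root system $\Delta_W^+$ with $\B(W)\cong\bigotimes_{\beta\in\Delta_W^+}\B(W_\beta)$ as graded vector spaces; pulling $\Delta_W^+$ back through the first reflection $R_1$ (apply $s_{\alpha_1}$, adjoin $\alpha_1$) produces $\Delta^+\subset\Z^\theta$ together with simple \ydms $M_\alpha$, with $M_{\alpha_1}=M_1$, and combining with (a) yields $\B(M)\cong\B(W)\otimes\B(M_1)\cong\bigotimes_{\alpha\in\Delta^+}\B(M_\alpha)$ as $\N$-graded vector spaces. Finally I would verify that, as $i$ ranges over $\{1,\dots,\theta\}$, the reflections $R_i$ and the associated root sets assemble into a genuine Weyl-groupoid root system --- involutivity of the $R_i$, integrality of the Cartan entries, finiteness and reflection-stability of $\Delta^+$ --- which reduces to a combinatorial check on each rank-$2$ sub-\ydm $M_i\oplus M_j$.

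The hard part will be (a): identifying the coinvariant subalgebra with a Nichols algebra and splitting off $\B(M_1)$. Here I would use the Radford--Majid biproduct: the Hopf algebra projection $\B(M)\twoheadrightarrow\B(M_1)$ that kills the remaining generators yields $\B(M)\cong R\,\#\,\B(M_1)$ as graded vector spaces, where $R:=\B(M)^{\mathrm{co}\,\B(M_1)}$ is a braided Hopf algebra. One must then show that $R$ is generated by its degree-one component, that this component is (as a \ydm, after transport along the reflection functor) exactly $W$, and that $R$ satisfies the universal property of Definition~\ref{page_Nichols}, so that $R\cong\B(W)$. This rests on $\B(M)$ being coradically graded together with the local nilpotency of $\ad_c\B(M_1)$ (to see that $W$ generates $R$), and it is also the place where the simplicity of the $W_j$ and the rank count in (b) are established, since all of these are extracted from the same analysis of the braided adjoint action.
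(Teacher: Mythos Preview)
The paper does not give its own proof of this theorem: it is stated with the citation \cite{HS10a} and used as a black box throughout, so there is nothing in the paper to compare your argument against.

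That said, your sketch is a reasonable outline of the strategy in the cited Heckenberger--Schneider work (and the closely related \cite{AHS10}): decompose $M$ into simples, peel off one simple summand via the braided adjoint action and the Radford--Majid bosonization $\B(M)\cong R\,\#\,\B(M_1)$, identify the coinvariants $R$ with a Nichols algebra $\B(W)$ of strictly smaller rank, and induct, assembling the reflections into a Weyl groupoid. One caution: your first reduction invokes the semisimplicity lemma, which in the paper is stated only for algebraically closed $\k$ with $\cha\k\nmid\#G$, whereas the theorem is asserted for an \emph{arbitrary} group $G$; in the original source the hypothesis is phrased directly in terms of $M$ being a finite direct sum of finite-dimensional irreducible Yetter--Drinfel'd modules, so you should state that assumption explicitly rather than deriving it. The genuinely delicate points you flag in (a) --- that the coinvariant subalgebra is generated in degree one, that its degree-one part is the reflected module $W$, and that it inherits the Nichols property --- are indeed the substance of the cited references and would each require several pages to carry out rigorously.
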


\begin{example}
  Assume $q_{11}=q_{22}=q_{12}q_{21}=-1$, then the diagram is:
  \image{0.2}{Rank2A2Decorated}
  Some calculations show that $x_3:=[x_1,x_2]_\tau\neq 0$ ($\not\in M$!),
  but $[x_2,[x_1,x_2]_\tau]_\tau=[x_1,[x_1,x_2]_\tau]_\tau=0$.
  Hence $\B(M)$ corresponds to the Borel part of $A_2=\mathfrak{sl}_3$.

  As $q_{11}=q_{22}=-1$ as well as $q_{33}=(q_{11}q_{12})(q_{22}q_{21})=-1$,
  all three  Nichols algebras $\B(x_i\k)$ of rank 1 are 2-dimensional. The
  Nichols algebra $\B(x_1\k\oplus x_2\k)$ itself is 8-dimensional with PBW-Basis
  $x_1^ix_2^jx_3^k$ with $i,j,k\in\{0,1\}$, i.e. multiplication in  $\B(M)$
  yields a vector space bijection:
  $$\B(M)\cong \B(x_1\k)\otimes \B(x_2\k)\otimes
    \B(x_3\k)=\k[x_1]/(x_1^2)\otimes \k[x_2]/(x_2^2)\otimes \k[x_3]/(x_3^2)$$
\end{example}
In the same sense, over abelian $G$ for $a_{ij}$ any proper Cartan matrix of a
semisimple Lie algebra is realized for braiding matrix
$q_{ij}q_{ji}=q_{ii}^{-a_{ij}}$.

However, several additional exotic examples of
finite-dimensional Nichols algebras exist, that possess unfamiliar Dynkin
diagrams, such as a multiply-laced triangle, and where Weyl reflections may
connect different diagrams (yielding a \emph{Weyl groupoid}). Heckenberger
completely classified all Nichols algebras over abelian $G$ in \cite{Heck09}.

Over \emph{nonabelian} groups, still much is open, but progress is made:
Andruskiewitsch, Heckenberger, and Schneider studied the Weyl groupoid in this
setting as well and established a
root system and a PBW-basis for finite-dimensional Nichols algebras in
\cite{AHS10}.
\begin{itemize}
 \item By detecting certain ``defect'' subconfigurations (so-called
type $D$) most higher symmetric and all alternating groups and later many
especially sporadic groups were totally discarded
(Andruskiewitsch et. al. \cite{AZ07},\cite{AFGV11} etc.).
 \item Only few finite-dimensional indecomposable examples are known so
far, namely $\D_4$ of type $A_2$ and $\S_3,\S_4,\S_5$ of type $A_1$ (Schneider
et. al. \cite{MS00}), higher analogues of $\D_4$ (\cite{HS10b}), and some
rank 1 examples over metacyclic groups \cite{GranaZoo}, as well as infinite
families of Lie type over extraspecial groups \cite{Len13a}.
  \item In \cite{HV13} all finite-dimensional Nichols algebras of Rank 2 have
  been determined
\begin{itemize}
 \item Type $A_2$ over a quotient of the group $\Gamma_2$, which has been known 
  (e.g. $\D_4$)
 \item Type $G_2$ over a rack of type $T$, between a central element
  and a group $\mathbb{A}_4$.
 \item Three Nichols algebras over quotients of the group $\Gamma_3$ (e.g.
  $\S_3$), not of finite Cartan type and depending
  on the characteristic of the base field
\end{itemize}
\end{itemize}

\subsection{A First Trace Product Formula}\label{sec_Stabilizing}

In the following we want to use the root system $\Delta^+$ of a Nichols algebra
$\B(M)$ to derive a trace product formula.

\begin{definition}
  A \emph{factorization} of a Nichols algebra $\B(M)$ of a braided vectorspace
  is a collection of braided vector spaces $(M_\alpha)_{\alpha\in\Delta^+}$
  with some arbitrary index set $\Delta^+$, such that:
  \begin{itemize}
   \item All $M_\alpha$ are grading-homogeneous, braided subspaces of the
      Nichols algebra $M_\alpha\subset\B(M)$.
   \item The multiplication in $\B(M)$ induces a
      graded isomorphism of braided vector spaces
      $\mu_{\B(M)}:\;\bigotimes_{\alpha\in\Delta^+}\B(M_\alpha)
	\stackrel{\sim}{\longrightarrow}\B(M)$.
  \end{itemize}

  An operator $Q$ on $\B(M)$ is said to \emph{stabilize} the factorization iff
  $QM_\alpha\subset M_\alpha$ for all $\alpha\in\Delta^+$. In this case we
  denote by $Q_\alpha:=Q|_{\B(M_\alpha)}$ the restriction.
  $Q$ is said to \emph{normalize} the factorization iff for each
  $\alpha\in\Delta^+$ there is a $\beta\in\Delta^+$ with 
  $QM_\alpha\subset M_\beta$. In this case $Q$ acts on $\Delta^+$ by
  permutations and we denote the shifting restriction
  $Q_{\alpha\rightarrow\beta}:=Q|_{M_\alpha}$ with image in
  $M_\beta=M_{Q\alpha}$.
\end{definition}
\begin{example}
The root system $W_\alpha,\;\alpha\in\Delta^+$ of a Nichols algebra
$\B(M)$ of a \ydm $M$ (see section \nref{sec_Preliminaries}) is the leading
example of a factorization. 
\end{example}

A factorization of a Nichols algebra can be used to derive a product formula of
the trace and graded trace of some operator $Q$. A first application is:

\begin{lemma}
  Let $\B(M)$ be a Nichols with factorization $M_\alpha,\alpha\in \Delta^+$ and
  $Q$ an algebra operator that stabilizes this factorization. Then the trace
  of $Q$ is
  $\tr\left(Q\right)
    =\prod_{\alpha\in\Delta^+}\tr\left(Q_\alpha\right)$.
\end{lemma}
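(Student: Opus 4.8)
The plan is to reduce the multiplicative statement about $\tr(Q)$ to the additive-to-multiplicative behavior of the trace under tensor products, which is exactly Lemma~\ref{lm_product}. By definition of a factorization, multiplication in $\B(M)$ gives a graded isomorphism of braided vector spaces $\mu_{\B(M)}:\bigotimes_{\alpha\in\Delta^+}\B(M_\alpha)\stackrel{\sim}{\longrightarrow}\B(M)$. Since $Q$ is an algebra operator that stabilizes the factorization, $Q$ restricts to operators $Q_\alpha$ on each $\B(M_\alpha)$, and the first step is to check that $\mu_{\B(M)}$ intertwines $Q$ with the tensor-product operator $\bigotimes_{\alpha}Q_\alpha$, i.e.\ that $Q\circ\mu_{\B(M)}=\mu_{\B(M)}\circ\bigotimes_{\alpha}Q_\alpha$. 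This is where being an \emph{algebra} operator is essential: on a product $b_{\alpha_1}\cdots b_{\alpha_k}$ of elements from the respective $\B(M_\alpha)$'s, $Q$ multiplicativity gives $Q(b_{\alpha_1}\cdots b_{\alpha_k})=Q(b_{\alpha_1})\cdots Q(b_{\alpha_k})$, and since $Q$ stabilizes the factorization each $Q(b_{\alpha_i})$ again lies in $\B(M_{\alpha_i})$, so this is precisely the image under $\mu_{\B(M)}$ of $\bigotimes_\alpha Q_{\alpha}$ applied to $b_{\alpha_1}\otimes\cdots\otimes b_{\alpha_k}$.

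Once the intertwining is established, conjugation-invariance of the trace gives $\tr(Q)=\tr\bigl(\bigotimes_{\alpha\in\Delta^+}Q_\alpha\bigr)$, and then the multiplicativity of the trace on tensor products — the finite-dimensional linear algebra fact underlying the second identity in Lemma~\ref{lm_product} — yields $\tr\bigl(\bigotimes_\alpha Q_\alpha\bigr)=\prod_{\alpha\in\Delta^+}\tr(Q_\alpha)$. Note that since $\B(M)$ is finite-dimensional, only finitely many $\B(M_\alpha)$ can be nontrivial (those with $\B(M_\alpha)=\k$ contribute a factor $\tr(Q_\alpha)=1$ and do not affect the product), so all tensor products and products here are genuinely finite and there are no convergence subtleties; I would remark on this to justify treating $\Delta^+$ as effectively finite.

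I do not expect a serious obstacle here; the lemma is essentially a formal consequence of the definition of a factorization together with Lemma~\ref{lm_product}. The one point that deserves care is the compatibility of the grading with the operator $Q$ and the identification of the tensor-product grading — but this is guaranteed because $\mu_{\B(M)}$ is by hypothesis a graded isomorphism and $Q$ is a (graded) algebra operator, so everything is automatically compatible. The mild subtlety worth spelling out is the ordering issue: the tensor product $\bigotimes_{\alpha\in\Delta^+}\B(M_\alpha)$ presupposes a chosen linear order on $\Delta^+$, but since the trace of a tensor product operator is independent of that ordering (both sides equal $\prod_\alpha\tr(Q_\alpha)$), the statement is well-posed regardless. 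This lemma is then the trace-level shadow of the forthcoming graded-trace version (the stated Corollary), which follows by applying the same argument to $t^E Q$ in place of $Q$.
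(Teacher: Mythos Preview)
Your argument is correct and follows essentially the same route as the paper: show that $\mu_{\B(M)}$ intertwines $Q$ with $\bigotimes_\alpha Q_\alpha$ (using that $Q$ is multiplicative and stabilizes each $M_\alpha$), then invoke conjugation-invariance of the trace together with the multiplicativity of the trace on tensor products. Your additional remarks on finiteness and the irrelevance of the ordering of $\Delta^+$ are fine but not needed for the core argument.
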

\begin{proof}
  We evaluate the trace in the provided factorization: Let $Q$ act diagonally on
  the tensor product $\bigotimes_{\alpha\in\Delta^+}\B(M_\alpha)$ by acting on
  each factor via the restriction $Q_\alpha$, which is possible because $Q$ was
  assumed to stabilize this factorization. The action of an algebra operator $Q$
  commutes with the multiplication $\mu_{\B(M)}$ so the trace of $Q$ acting
  on both sides coincides. The trace on a tensor product is the
  product of the respective trace and hence we get
    $$\tr\left(Q|_{\B(M)}\right)
      \,=\,\tr\left(Q|_{\bigotimes_{\alpha\in\Delta^+}\B(M_\alpha)}\right)
      \,=\prod_{\alpha\in\Delta^+}\!\tr\left(Q|_{\B(M_\alpha)}\right)
      \,=\prod_{\alpha\in\Delta^+}\!\tr\left(Q|_\alpha\right). $$
\end{proof}

To calculate the graded trace with the preceeding lemma, first note that $t^E$
is a graded algebra automorphism, so if $Q$ fulfills the conditions of the
lemma, so does $Y=t^EQ$. Hence, we find $\trace{Q}{\B(M)}=\tr(t^EQ)=\tr(Y)$: 

\begin{corollary}\label{cor_ProductFormula}
  Let $\B(M)$ be a Nichols algebra with factorization $W_\alpha,\alpha\in
  \Delta^+$ and $Q$
  an algebra operator that stabilizes this factorization. Then
  $$\trace{Q}{\B(M)}(t)
    =\prod_{\alpha\in\Delta^+}\trace{Q_\alpha}{\B(M_\alpha)}(t)\,.$$  
\end{corollary}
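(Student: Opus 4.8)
The plan is to reduce this immediately to the preceding lemma on ordinary traces by the observation already flagged in the text: the Boltzmann operator $t^E$ is a graded algebra automorphism of $\B(M)[[t]]$, so the composite $Y := t^E Q$ is again an algebra operator, and moreover it stabilizes the given factorization exactly when $Q$ does. First I would verify the stabilization claim: since each $M_\alpha$ is a grading-homogeneous braided subspace, $t^E$ acts on $M_\alpha$ as a scalar $t^{|\alpha|}$ (times the identity), hence $t^E M_\alpha \subset M_\alpha[[t]]$, and combining with $Q M_\alpha \subset M_\alpha$ gives $Y M_\alpha \subset M_\alpha[[t]]$. The restriction of $Y$ to $\B(M_\alpha)$ is then $Y_\alpha = (t^E Q)|_{\B(M_\alpha)} = t^{E_\alpha} Q_\alpha$, where $E_\alpha$ is the Hamilton operator of the graded vector space $\B(M_\alpha)$; this is the identity one needs to match the factors on the right-hand side.

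Next I would apply the preceding lemma to the algebra operator $Y$ (working over the coefficient ring $\k[[t]]$ rather than $\k$, which causes no difficulty since everything is finite-dimensional and the trace is computed in a basis): this gives $\tr(Y) = \prod_{\alpha \in \Delta^+} \tr(Y_\alpha)$. By definition of the graded trace, the left-hand side is $\tr(t^E Q) = \trace{Q}{\B(M)}(t)$, and each factor on the right is $\tr(t^{E_\alpha} Q_\alpha) = \trace{Q_\alpha}{\B(M_\alpha)}(t)$. Assembling these three identifications yields the claimed formula.

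The only genuine point requiring care — and the step I would call the main obstacle, though it is mild — is the bookkeeping that the grading used to define $t^E$ on $\B(M)$ restricts correctly to the grading on each $\B(M_\alpha)$, i.e. that the isomorphism $\mu_{\B(M)} : \bigotimes_\alpha \B(M_\alpha) \xrightarrow{\sim} \B(M)$ is one of graded vector spaces with the codiagonal grading on the tensor product (this is part of the definition of a factorization) so that $t^{E_{\B(M)}}$ corresponds to $\bigotimes_\alpha t^{E_{\B(M_\alpha)}}$ under $\mu_{\B(M)}$, exactly as in Lemma \ref{lm_product}. Once this compatibility is in place, the multiplicativity of the ordinary trace on tensor products — already invoked in the proof of the preceding lemma — does all the work, and no further computation is needed.

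\begin{proof}
  The Boltzmann operator $t^E$ is a graded algebra automorphism of $\B(M)[[t]]$, so $Y:=t^EQ$ is again an algebra operator. Since each $M_\alpha$ is a grading-homogeneous subspace, $t^E$ acts on $M_\alpha$ by the scalar $t^{|\alpha|}$, hence $YM_\alpha = t^EQM_\alpha\subset M_\alpha[[t]]$; thus $Y$ stabilizes the factorization, with restriction $Y_\alpha = t^{E_\alpha}Q_\alpha$ where $E_\alpha$ is the Hamilton operator of $\B(M_\alpha)$. Applying the preceding lemma to $Y$ over the coefficient ring $\k[[t]]$ — which is permissible as all layers are finite-dimensional — gives $\tr(Y)=\prod_{\alpha\in\Delta^+}\tr(Y_\alpha)$, using that the graded isomorphism $\mu_{\B(M)}$ identifies $t^{E_{\B(M)}}$ with $\bigotimes_\alpha t^{E_{\B(M_\alpha)}}$ as in Lemma \ref{lm_product}. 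By definition $\tr(Y)=\tr(t^EQ)=\trace{Q}{\B(M)}(t)$ and likewise $\tr(Y_\alpha)=\tr(t^{E_\alpha}Q_\alpha)=\trace{Q_\alpha}{\B(M_\alpha)}(t)$, which yields the claim.
\end{proof}
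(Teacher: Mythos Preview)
Your proposal is correct and follows exactly the paper's own argument: the paper simply notes that $t^E$ is a graded algebra automorphism, so $Y=t^EQ$ again satisfies the hypotheses of the preceding lemma, and applies that lemma to $Y$. Your write-up adds a bit more detail (the compatibility of $t^E$ with the graded isomorphism $\mu_{\B(M)}$ via Lemma~\ref{lm_product}, and the identification $Y_\alpha=t^{E_\alpha}Q_\alpha$), but the idea is identical.
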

\begin{example}
Let the braided vector space $M=x_1\k\oplus x_2\k$ be defined by
$q_{ij}=
\begin{pmatrix}
  -1 & -1\\
  +1 & -1\\
\end{pmatrix}$.
Then the diagonal Nichols algebra $\B(M)$ is of standard Cartan type $A_2$ and
possesses a factorization $\Delta^+=\{\alpha_1,\alpha_2,\alpha_{12}\}$ with
$$M_{\alpha_1}=x_1\k,
\quad M_{\alpha_2}=x_2\k,
\quad M_{\alpha_{12}}=x_{12}\k,
\quad x_{12}:=[x_1,x_2]_q:=x_1x_2+x_2x_1\,.$$
All braidings are $-1$, hence $\B(x_\alpha)\cong \k[x_\alpha]/(x_\alpha^2)$.
This implies that the multiplication in $\B(M)$ is an isomorphism of graded
vector spaces:
$$\mu_{\B(M)}:\;  
\k[x_{{1}}]/(x_{{1}}^2)
\otimes \k[x_{{2}}]/(x_{{2}}^2)
\otimes \k[x_{{12}}]/(x_{{12}}^2)
\cong \B(M)$$
This shows that the Hilbert series  $\H(t)=\trace{1}{\B(M)}(t)$ is
\begin{align*}
  \trace{1}{\B(M)}(t)
  \;&=\;\trace{1}{\B(M_{\alpha_{1}})}\trace{1}{\B(M_{\alpha_{2}})}
    \trace{1}{\B(M_{\alpha_{12}})}
  \;=\;(1+t)(1+t)(1+t^2)\\
  \;&=\;1+2t+2t^2+2t^3+t^4
\end{align*}
\end{example}
\begin{example}
In the previous example of a Nichols algebra $\B(M)$, let $Q\in \End(M)$ be
defined by 
$Qx_1:=x_1$ and $Qx_2:=-x_2$.
This map preserves the braiding and hence extends uniquely to an algebra
automorphism on $\B(M)$, in particular,
$Qx_{12}=Q(x_1x_2+x_2x_1)=-(x_1x_2+x_2x_1)=-x_{12}$ holds.
A direct calculation yields:
\begin{align*}
\trace{Q}{\B(M)}(t)
  \;&=\;\tr\left(Q|_{1_{\B(M)}}\right)
  +t\cdot \tr\left(Q|_{x_1,x_2}\right)
  +t^2\cdot \tr\left(Q|_{x_1x_2,x_{12}}\right) \\
  &\qquad+t^3\cdot \tr\left(Q|_{x_1x_{12},x_2x_{12}}\right)
  +t^4\cdot \tr\left(Q|_{x_1x_2x_{12}}\right) \\
  \;&=\;\tr\begin{pmatrix} 1 \end{pmatrix}
  +t\cdot\tr\begin{pmatrix}  1 & 0 \\ 0 & -1 \end{pmatrix}
  +t^2\cdot\tr\begin{pmatrix} -1 & 0 \\ 0  & -1 \end{pmatrix}\\
  &\qquad+t^3\cdot\tr\begin{pmatrix}  1 & 0 \\ 0 & -1 \end{pmatrix}
  +t^4\tr\begin{pmatrix}  1  \end{pmatrix}\\
  \;&=\;1-2t^2+t^4
\end{align*} 
The product formula returns for the same trace:
$$\trace{Q}{\B(M)}(t)
  \;=\;\trace{Q}{\B(M_{\alpha_{1}})}(t)\cdot\trace{Q}{\B(M_{\alpha_{2}})}(t)\cdot
    \trace{Q}{\B(M_{\alpha_{12}})}(t)
  \;=\;(1+t)(1-t)(1-t^2)$$
\end{example}

In the next section, we study the special case of a diagonal Nichols
algebra, we will also study examples of operators $Q$, that do neither
stabilize nor normalize the root system. However, their graded trace is still
factorizing, which indicates the existence of alternative PBW-basis. We will
construct such for the case $N=2,\;q=\sqrt{-1}$.

\subsection{Nichols Algebra Over Abelian Groups}\label{sec_diagonal}
We now restrict our attention to
the Nichols algebra $\B(M)$ of a \ydm $M$ of rank $n$ over an abelian
group $G$ and $\k=\C$. This means $M$ is diagonal i.e. the sum of $1$-dimensional braided
vector spaces $x_i\k$. According to \cite{Heck09}, $\B(M)$ possesses an
arithmetic root system $\Delta^+$ that can be identified with a set of Lyndon
words $\mathcal{L}$ in $n$
letters, with word length corresonding to the grading in the Nichols algebra.
Such a Lyndon word corresponds to iterated $q$-commutators in the letters
$x_i$ according to iterated Shirshov decomposition of the word.

For any positive root $\alpha\in\Delta^+$ we denote by $N_\alpha$ the
order of the self-braiding $\chi(\alpha,\alpha)=q_{\alpha,\alpha}$. It is known
that this determines 
$\B(x_\alpha)=\k[x_\alpha]/(x_\alpha^{N_\alpha})$
and we denote by $|\alpha|$ the length of the Lyndon word resp.\ the degree of
the root vector $x_\alpha$ in the Nichols algebra grading.

We further denote by $g_\alpha$ the $G$-grading of $x_\alpha$, extending the
$G$-grading of $M$ on simple roots. Moreover,
we denote the scalar action of any $g\in G$ on $x_\alpha$ by
$\alpha(g)=\chi_M(g_\alpha,g)\in\k^\times$, extending the $G$-action of $M$ on
simple roots. We frequently denote 
$(N)_t=1+t+\cdots t^{N-1}=\frac{1-t^N}{1-t}$.

\begin{lemma}\label{lm_ProductFormula_abelian}
  Let $Q$ be an algebra operator on a diagonal Nichols algebra $\B(M)$ that
  stabilizes the root system, i.e. for all $\alpha\in\Delta^+$ the root vector
  $x_\alpha$ is an eigenvector to $Q$ with eigenvalue $\lambda_\alpha\in\k$.
  Then the product formula of Corollary \nref{cor_ProductFormula} reads as
  follows:
  $$\trace{Q}{\B(M)}(t)
    =\prod_{\alpha\in\Delta^+}
      (N_\alpha)_{\lambda_\alpha t^{|\alpha|}}$$
  Especially for the action of a group element $g\in G$ we get 
  $$\trace{g}{\B(M)}(t)
    =\prod_{\alpha\in\Delta^+}
      (N_\alpha)_{\alpha(g)t^{|\alpha|}}\,.$$
\end{lemma}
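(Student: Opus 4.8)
The plan is to apply Corollary~\nref{cor_ProductFormula} to the root system factorization $M_\alpha = x_\alpha\k$, $\alpha\in\Delta^+$, and then compute each local factor $\trace{Q_\alpha}{\B(M_\alpha)}(t)$ explicitly. First I would check the hypotheses of the corollary: the assumption that each $x_\alpha$ is an eigenvector of $Q$ with eigenvalue $\lambda_\alpha$ means exactly that $Q$ stabilizes the one-dimensional subspace $M_\alpha$, so $Q$ stabilizes the factorization in the sense of the preceding definition, and $Q$ is an algebra operator by hypothesis. Hence the corollary applies and gives
$$\trace{Q}{\B(M)}(t)=\prod_{\alpha\in\Delta^+}\trace{Q_\alpha}{\B(M_\alpha)}(t).$$

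Next I would compute a single factor. Since $\B(M_\alpha)=\k[x_\alpha]/(x_\alpha^{N_\alpha})$ has basis $1, x_\alpha, x_\alpha^2,\dots,x_\alpha^{N_\alpha-1}$ sitting in degrees $0, |\alpha|, 2|\alpha|,\dots,(N_\alpha-1)|\alpha|$ respectively, and $Q$ acts as an algebra automorphism with $Q x_\alpha = \lambda_\alpha x_\alpha$, it acts on $x_\alpha^k$ by the scalar $\lambda_\alpha^k$. Therefore
$$\trace{Q_\alpha}{\B(M_\alpha)}(t)=\sum_{k=0}^{N_\alpha-1}\lambda_\alpha^k t^{k|\alpha|}=\sum_{k=0}^{N_\alpha-1}\bigl(\lambda_\alpha t^{|\alpha|}\bigr)^k=(N_\alpha)_{\lambda_\alpha t^{|\alpha|}},$$
using the notation $(N)_t=1+t+\cdots+t^{N-1}$ introduced just above the lemma. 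Multiplying over all $\alpha\in\Delta^+$ gives the first formula.

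For the special case $Q = g$, the action of a group element $g\in G$ on the Nichols algebra $\B(M)$ is an algebra automorphism (it acts by the Yetter--Drinfel'd action, extended as an algebra map), and each root vector $x_\alpha$ is an eigenvector with eigenvalue $\alpha(g)=\chi_M(g_\alpha,g)$ by the definition of the extended $G$-action on root vectors recalled before the lemma. So $g$ stabilizes the root system with $\lambda_\alpha=\alpha(g)$, and substituting into the first formula yields
$$\trace{g}{\B(M)}(t)=\prod_{\alpha\in\Delta^+}(N_\alpha)_{\alpha(g)t^{|\alpha|}}.$$

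The only mild subtlety — and the one point I would be careful about — is that $Q$ being an algebra automorphism is what forces $Q x_\alpha^k = \lambda_\alpha^k x_\alpha^k$, so that the local graded trace is a clean geometric-type sum; without multiplicativity one would only know the action on the degree-$|\alpha|$ piece. Everything else is a routine unwinding of $\trace{Q}{V}(t)=\sum_n t^n\tr(Q_n)$ on a one-generator truncated polynomial algebra, together with the already-established product formula. No genuine obstacle arises.
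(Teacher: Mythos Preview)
Your proof is correct and follows essentially the same approach as the paper: apply Corollary~\nref{cor_ProductFormula} to reduce to a single factor $\B(M_\alpha)=\k[x_\alpha]/(x_\alpha^{N_\alpha})$, then use the basis $x_\alpha^k$ in degree $k|\alpha|$ together with multiplicativity of $Q$ to compute $\trace{Q_\alpha}{\B(M_\alpha)}(t)=(N_\alpha)_{\lambda_\alpha t^{|\alpha|}}$. Your treatment is in fact slightly more explicit than the paper's, since you also spell out why the hypotheses of the corollary are met and why the special case $Q=g$ follows.
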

\begin{proof}
  The factorization follows from Corollary \nref{cor_ProductFormula}. We yet
  have to verify the formula on each factor
  $W_\alpha=\B(x_\alpha)=\k[x_\alpha]/(x_\alpha^{N_\alpha})$.
  $W_\alpha$ has a basis $x_\alpha^k$ for $0\leq k< N_\alpha$ with degrees
  $k|\alpha|$. By assumption $Q$ acts on
  $x_\alpha$ via the scalar $\lambda_\alpha$ and by multiplicativity on
  $x_\alpha^k$ by $\lambda_\alpha^k$. Altogether:
  $$ \trace{Q}{\B(x_\alpha)}
    \;=\;\sum_{k=0}^{N_\alpha-1} \lambda_\alpha^k \cdot t^{k|\alpha|}
    \;=\;\sum_{k=0}^{N_\alpha-1} (\lambda_\alpha\cdot t^{|\alpha|})^k
    \;=\;(N_\alpha)_{\lambda_\alpha|\alpha|}\,. $$
\end{proof}

\begin{example}
Let the braided vector space $M=x_1\k\oplus x_2\k$ be defined by
$q_{ij}=
\begin{pmatrix}
  q^2 & q^{-1}\\
  q^{-1} & q^2\\
\end{pmatrix}$
with $q$ a primitive $2N$-th root of unity. Then the diagonal Nichols algebra
$\B(M)$ is of standard Cartan type $A_2$ and
possesses a factorization $\Delta^+=\{\alpha_1,\alpha_2,\alpha_{12}\}$ with
$M_{\alpha_1}=x_1\k$,
$M_{\alpha_2}=x_2\k$, and
$M_{\alpha_{12}}=x_{12}\k$, where
$x_{12}:=[x_1,x_2]_q:=x_1x_2-q^{-1}x_2x_1$.
This implies that the multiplication in $\B(M)$ is an isomorphism of graded
vector spaces:
\begin{align*}
\mu_{\B(M)}:\;\B(M)
\;&\cong\; \B(M_{\alpha_1})\otimes \B(M_{\alpha_2})
\otimes\B\left(M_{\alpha_{12}}\right)\\
\;&\cong\; \k[x_1]/(x_{1}^N)\otimes \k[x_2]/(x_{2}^N)\otimes \k[x_{12}]/(x_{12}^N)\,.
\end{align*}
This obviously agrees with the Hilbert series in Lemma \nref{lm_ProductFormula_abelian}:
$$ \H(t) \;=\; \trace{1_\B(M)}{\B(M)}(t)
\;=\; \prod_{i=1}^3\trace{1_\B(M_{\alpha_i})}{\B(M_{\alpha_i})}(t)
\;=\; (N)_t(N)_t(N)_{t^2}\,. $$
Let us now apply the formula of
Lemma \nref{lm_ProductFormula_abelian} to calculate the graded trace of the
action of group elements (which stabilize the root system): We realize the
braided vector space $M$ as a Yetter-Drinfel'd module over $G:=\Z_{2N}\times
\Z_{2N}=\langle g_1,g_2\rangle$, 
such that $x_1$ is $g_1$-graded and $x_2$ is $g_2$ graded, with suitable
actions
$$g_1x_1=q^2x_1\qquad g_1x_1=q^2x_1 
\qquad g_1x_2=q^{-1}x_2\qquad g_2x_1=q^{-1}x_1\,.$$
Then we get for the action of each group element $g_k$:
$$ \trace{g_k}{\B(M)}(t)
  \;=\;\prod_{\alpha\in\{\alpha_1,\alpha_2,\alpha_{12}\}}
    \trace{g_k}{\B(M_\alpha)}
  \;=\;(N)_{q^2t}(N)_{q^{-1}t}(N)_{qt^2}
$$
\end{example}

We now look at graded traces of automorphisms $Q$, where $Q$ does not stabilize
the root system. We restrict ourselves to diagonal Nichols
algebras $\B(M)$, so we may use the theory of Lyndon words.

The PBW-basis consists of monotonic monomials
$$[u_1]^{n_1}[u_2]^{n_2}\cdots [u_k]^{n_k}\quad
\text{with}\quad u_1\,>\,u_2\,>\,\cdots \,>\,u_k
\qquad \forall u_i\in\mathcal{L}\,.$$
The PBW-basis carries the lexicographic order $<$ and by
\cite{HLecture08} (Remark after Thm 3.5) this is the same as the
lexicographic order of the composed words
$u_1^{n_1}u_2^{n_2},\cdots u_k^{n_k}$. For a sequence of Lyndon words
$\vec{u}=(u_1,\ldots u_k)$, not necessarily monotonically sorted, we define 
$[\vec{u}]:=[u_1][u_2]\cdots [u_k]$.
In particular, sorted sequences correspond to the PBW-basis. For any sequence of
Lyndon words $\vec{u}$, denote by $\vec{u}^{\;\sort}$ its monotonic
sorting.

\begin{lemma}
\begin{enumerate}
 \item[a)] For any sequence of Lyndon words $\vec{u}$, not necessarily monotonically
  sorted, we have $[\vec{u}]\,=\,q\cdot [\vec{u}^{\;\sort}]\,+\,\text{smaller}$,
  where $q\neq 0$ and ``smaller'' denotes linear combinations of PBW-elements
  lexicographically smaller than the PBW-element $[\vec{u}^{\;\sort}]$.
 \item[b)] Let $\sigma\in\S_k$ and $u_1>u_2>\cdots >u_k\in\mathcal{L}$ then 
\begin{eqnarray*}
  [u_{\sigma(1)}]^{n_{\sigma(1)}}[u_{\sigma(2)}]^{n_{\sigma(2)}}\cdots
  [u_{\sigma(k)}]^{n_{\sigma(k)}}
  \;=\;q_{\sigma}\cdot [u_1]^{n_1}[u_2]^{n_2}\cdots
  [u_k]^{n_k} \quad \\ \,+\,\text{smaller}\,,
\end{eqnarray*}
  where $q_{\sigma}$ is the scalar factor associated to the braid group
  element $\hat{\sigma}\in\BB_k$, which is the image of $\sigma$
  under the Matsumoto section.
    Explicitely $q_{(i,i+1)}\,=\,\chi(\deg(x_{u_i}),\deg(x_{u_{i+1}}))^{n_i n_{i+1}}$
    in the notation of \cite{HLecture08} and general $q_\sigma$ are
    obtained by multiplying such factors along a reduced expression of $\sigma$.
\end{enumerate}
\end{lemma}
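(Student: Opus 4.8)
The plan is to prove part (a) first, and then derive part (b) as the special case where $\vec u$ consists of blocks of repeated Lyndon words that are permuted among themselves, being careful to track the scalar $q$ explicitly. For part (a), I would argue by induction on the number of inversions of the sequence $\vec u = (u_1,\dots,u_k)$ with respect to the monotonic order, i.e.\ on $\#\{(i,j) : i<j,\ u_i < u_j\}$. If there are no inversions, $\vec u = \vec u^{\,\sort}$ and there is nothing to prove (take $q=1$). Otherwise there is an adjacent pair $u_i < u_{i+1}$ in $\vec u$; the key algebraic input is the straightening relation for iterated $q$-commutators of Lyndon words, namely that $[u_{i+1}][u_i]$ can be rewritten as $\chi(\deg x_{u_{i+1}}, \deg x_{u_i})^{-1}\,[u_i][u_{i+1}]$ plus a linear combination of PBW-monomials in Lyndon words that are lexicographically smaller than $u_i u_{i+1}$ (this is the standard Lyndon-word straightening in Nichols algebras, see \cite{HLecture08}). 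Applying this to the factors in positions $i,i+1$ of $[\vec u]$ swaps them at the cost of a nonzero scalar and produces, besides the swapped word $[\vec u']$ (which has one fewer inversion), a sum of monomials that — after invoking the ordering identification "lex order on PBW $=$ lex order on composed words" quoted just before the lemma — are all strictly lex-smaller than $[\vec u^{\,\sort}]$. The induction hypothesis applied to $[\vec u']$ finishes the count, and the lex-smaller correction terms coming from straightening survive intact because (again by the ordering identification) everything to the left and right of the swapped pair only shrinks the composed word in lex order.

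For part (b), I would apply part (a) with $\vec u$ the concatenation
$$\vec u \;=\; (\underbrace{u_{\sigma(1)},\dots,u_{\sigma(1)}}_{n_{\sigma(1)}}, \dots, \underbrace{u_{\sigma(k)},\dots,u_{\sigma(k)}}_{n_{\sigma(k)}}),$$
so that $\vec u^{\,\sort}$ is the block sequence giving the PBW-monomial $[u_1]^{n_1}\cdots[u_k]^{n_k}$, and $[\vec u] = [u_{\sigma(1)}]^{n_{\sigma(1)}}\cdots[u_{\sigma(k)}]^{n_{\sigma(k)}}$. Part (a) already gives the stated equality with some nonzero scalar $q$; the remaining work is to identify that scalar as $q_\sigma$. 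Here I would organize the scalar computation \emph{blockwise}: when an entire $u_b$-block of length $n_b$ is moved past an entire $u_a$-block of length $n_a$, each elementary swap contributes $\chi(\deg x_{u_b},\deg x_{u_a})^{\pm1}$, and there are $n_a n_b$ such swaps, giving the factor $\chi(\deg x_{u_i},\deg x_{u_{i+1}})^{n_i n_{i+1}}$ for an adjacent transposition of blocks — exactly the claimed $q_{(i,i+1)}$. Since the Matsumoto/Matsumoto–Tits section $\sigma\mapsto\hat\sigma\in\BB_k$ is characterized by multiplicativity along reduced words and the well-definedness is precisely the braid relation (which holds because the underlying braiding satisfies Yang–Baxter, cf.\ the Yang–Baxter lemma in Section~\nref{sec_Preliminaries}), writing $\sigma$ as a product of adjacent transpositions along a reduced expression and multiplying the corresponding block-swap scalars yields $q_\sigma$ independently of the chosen reduced expression. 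The "smaller" error terms are inherited verbatim from part (a).

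The main obstacle, and the step I would spend the most care on, is the bookkeeping in part (a): verifying that the lex-smaller terms produced by each elementary Lyndon straightening, once they are sandwiched between the unchanged prefix and suffix of $\vec u$ and then themselves re-sorted into PBW form, remain lex-smaller than $[\vec u^{\,\sort}]$ rather than just smaller than some intermediate word. This is where the identification of the PBW lex order with the lex order on composed words (the Remark after Thm 3.5 in \cite{HLecture08}) does the real work, and I would state a small auxiliary claim — "if $v$ is lex-smaller than $w$ as composed words and $p,s$ are arbitrary words, then $pvs$ is lex-smaller than $pws$, and re-sorting preserves this" — and check it carefully, since the whole inductive scheme rests on it. A secondary subtlety is that in part (b) the scalar from part (a) could \emph{a priori} differ from $q_\sigma$ by contributions hidden in the "smaller" terms; one avoids this by noting that $q$ is, by construction in the proof of (a), a product of the explicit elementary straightening scalars along whatever sequence of adjacent swaps was used, and these are exactly the braid-group scalars, so the identification $q = q_\sigma$ is forced once one checks the reduced-word independence — which is again just Yang–Baxter.
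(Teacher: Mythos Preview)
Your treatment of part (b) matches the paper's: both induct on the length of $\sigma$, swap an adjacent out-of-order pair of blocks via the straightening relation (picking up the explicit scalar $\chi(\deg x_{u_i},\deg x_{u_{i+1}})^{n_in_{i+1}}$), and invoke part (a) to control the lower-order terms.

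For part (a), however, your induction on the number of inversions does not close. After the swap $[u_i][u_{i+1}]=q\,[u_{i+1}][u_i]+(\text{smaller})$, the main term $[\vec u\,']$ indeed has one fewer inversion; but the ``smaller'' terms, once sandwiched between the unchanged prefix and suffix, are products of Lyndon words that may have \emph{more} inversions than $\vec u$ (the straightening can replace two factors by arbitrarily many intermediate ones). So your induction hypothesis does not apply to them, and you still owe an argument that these sandwiched products lie in the span of PBW-monomials below $[\vec u^{\,\sort}]$ --- which is exactly the statement being proved. Your proposed auxiliary claim (``re-sorting preserves lex order of composed words'') is also false as stated: over $\{a<b\}$ the Lyndon-word sequences $(aab,b)$ and $(ab,ab)$ have composed words $aabb<abab$, but after sorting they read $baab>abab$.

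The paper avoids this with a different induction scheme: outer induction on the multiplicity $N$ of the \emph{largest} Lyndon word $w$ occurring in $\vec u$, inner induction on the position of its leftmost occurrence. One always swaps at that leftmost $w$, so $u_{i+1}=w$; then Prop.~3.9 of \cite{HLecture08} says the ``smaller'' correction is a combination of products of Lyndon words $v$ with $u_i<v<w$, hence the sandwiched sequence has strictly fewer copies of $w$ and the outer induction hypothesis applies to it directly. Having fewer copies of the top word also immediately forces its sorted form to be lex-smaller than $\vec u^{\,\sort}$, so ``smaller'' really is smaller. Your argument can be repaired by replacing the single inversion count with a double induction on $(\text{lex rank of }\vec u^{\,\sort},\ \text{number of inversions})$, but as written it has a gap.
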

\begin{proof}
  {\em Claim a):} We perform induction on the multiplicity of
  the highest appearing Lyndon word: So for a fixed $w\in\mathcal{L},N\in\N$
  suppose that the claim has been proven for all sequences $\vec{u'}$ with
  $u'_i\leq w$ and strictly less then $N$ indices $i$ with $u'_i=w$.

  Consider then a sequence $\vec{u}$ with $u_i\leq w$ and precisely $N$ indices
  $i$ with $u_i=w$. We perform a second induction on the index $i$ of the
  leftmost appearing $w=u_i$:
  \begin{itemize}
    \item If $w=u_1$ we may consider the
  sequence $\vec{u}_{\hat{1}}:=(u_2,\ldots u_k)$ having strictly less
  $w$-multiplicity. By induction hypothesis,
  $[\vec{u}_{\hat{1}}]=[\vec{u}_{\hat{1}}^{\;\sort}]+\text{smaller}$.
  Since $\vec{u}^{\;\sort}=(w,\vec{u}_{\hat{1}}^{\;\sort})$
  the assertion then also holds for $\vec{u}$.
   \item Otherwise, let $u_{i+1}=w$ be the leftmost appearance of $w$,
  especially $u_i<u_{i+1}=w$. By \cite{HLecture08} Prop.\ 3.9 we
  then have 
  $[u_i][u_{i+1}]=q\cdot[u_{i+1}][u_i]+\text{smaller}$
  where $q\neq 0$ and ``smaller'' means products of Lyndon words $[v_l]$ with
  $u_i<v<u_{i+1}=w$. Thus all products $[u_1]\cdots\text{smaller}\cdots [u_k]$
  contain $w$ with a multiplicity less then $\vec{u}$; by induction hypothesis
  these are linear combination of PBW-elements lexicographically strictly
  smaller then  $[\vec{u}^{\;\sort}]$. The remaining summand 
  $[u_1]\cdots[u_{i+1}][u_i]\cdots [u_k]$ has $w=u_{i+1}$ in a leftmore
  position and the claim follows by the second induction hypothesis.
  \end{itemize}

  {\em Claim b):} 
  We proceed by induction on the length of $\sigma\in\S_k$, which is the length
  of any reduced expression for $\sigma$. For $\sigma=\id$ we're done, so assume
  for some $i$ that $u_{\sigma(i)}<u_{\sigma(i+1)}$, hence
  $\sigma=(\sigma(i),\sigma(i+1))\sigma'$ with $\sigma'$ shorter.

  Again by \cite{HLecture08} Prop. 3.9 we have:
  $$ [u_{\sigma(i)}][u_{\sigma(i+1)}]
    \;=\;\chi(\deg(x_{u_i}),\deg(x_{u_{i+1}}))
    \cdot[u_{\sigma(i+1)}][u_{\sigma(i)}]\,+\,\text{smaller}\,. $$
  Moreover, for any sequences of Lyndon words $\vec{a},\vec{b}$, claim
  a) proves that $[\vec{a}]\cdot \text{smaller}\cdot [\vec{b}]$ is a linear-combination
  of PBW-elements lexicographically smaller than $[\vec{u}]$. Hence
  inductively 
  \begin{align*}
  [u_{\sigma(i)}]^{n_{\sigma(i)}}[u_{\sigma(i+1)}]^{n_{\sigma(i+1)}}
  \;&=\;\chi(\deg(x_{u_i}),\deg(x_{u_{i+1}}))^{n_{\sigma(i)}n_{\sigma(i+1)}}\\
  &\qquad\cdot[u_{\sigma(i+1)}]^{n_{\sigma(i+1)}}[u_{\sigma(i)}]^{n_{\sigma(i)}}
  \,+\,\text{smaller}\,.
  \end{align*}
  By the same argument (again using claim a) we may multiply both sides with
  the remaining factors:
  \begin{align*}
  & [u_1]^{n_1}\cdots[u_{\sigma(i)}]^{n_{\sigma(i)}}
  [u_{\sigma(i+1)}]^{n_{\sigma(i+1)}}\cdots [u_k]^{n_k} \\
  & \quad \;=\;\chi(\deg(x_{u_i}),\deg(x_{u_{i+1}}))^{n_{\sigma(i)}n_{\sigma(i+1)}} \\
  & \qquad \qquad \cdot[u_1]^{n_1}\cdots[u_{\sigma(i+1)}]^{n_{\sigma(i+1)}}
  [u_{\sigma(i)}]^{n_{\sigma(i)}}\cdots[u_k]^{n_k}
  \,+\,\text{smaller}\,.
  \end{align*}
  We may now use the induction hypothesis on $\sigma'$, which is shorter.
\end{proof}

\begin{theorem}
  Let $\B(M)$ be a finite-dimensional Nichols algebra over a Yetter-Drinfel'd
  module $M$ over an abelian group $G$. Let $Q$ be an automorphism of the graded
  algebra $V=\B(M)$ permuting the
  roots $QV_\alpha=V_{Q\alpha}$ and denote the action on root vectors by
  $$Qx_\alpha=:\lambda_Q(\alpha)x_{Q\alpha}
    \qquad \lambda_Q:\;\Delta^+\rightarrow\k\,.$$
  On any orbit $A\in\O_Q\left(\Delta^+\right)$ all orders $n_\alpha$ of
  $x_\alpha$ coincide for $\alpha\in A$ and we denote this value by $N_A$.
  Similarly, all degrees $\alpha$ coincide and we denote the sum over the
  orbit in slight abuse of notation $|A|=|\alpha|\cdot\#A$. Then
  $$\trace{Q}{V_A}(t)
    \;=\;\prod_{A\in\O_Q\left(\Delta^+\right)}(N_A)_{q_A(Q)\lambda_Q(A)t^{|A|}}$$
  with the $q$-symbol $(N)_t:=1+t+\cdots+t^{N-1}=\frac{1-t^N}{1-t}$ and
  $q(Q)\in\k^\times$ the scalar braiding factor of $Q$ acting as an
  element of $\BB_{|A|}$ on $A$, as in the preceeding lemma.
\end{theorem}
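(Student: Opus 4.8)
The plan is to compute the graded trace on each $Q$-orbit of roots separately, using the PBW-basis together with the preceding lemma on how $Q$ permutes PBW-monomials with a braiding scalar, and then multiply over orbits via Corollary~\nref{cor_ProductFormula}. Fix an orbit $A=\{\alpha,Q\alpha,\dots,Q^{d-1}\alpha\}$ of size $d=\#A$, with common order $N_A$ and common degree $|\alpha|$. The subalgebra $V_A:=\B(M_\alpha)\otimes\B(M_{Q\alpha})\otimes\cdots\otimes\B(M_{Q^{d-1}\alpha})$ sitting inside $\B(M)$ (ordered as a contiguous block of the PBW-factorization, after possibly reordering the total product, which only changes the identification by a graded isomorphism and hence does not affect the trace) is $Q$-stable, since $Q$ maps $x_{Q^i\alpha}$ to a scalar times $x_{Q^{i+1}\alpha}$ (indices mod $d$). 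So by the stabilizing product formula it suffices to prove the formula $\trace{Q}{V_A}(t)=(N_A)_{q_A(Q)\lambda_Q(A)t^{|A|}}$ for a single orbit.

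Next I would set up a basis of $V_A$. A PBW-basis of $V_A$ is given by the monomials $x_{\alpha}^{m_0}x_{Q\alpha}^{m_1}\cdots x_{Q^{d-1}\alpha}^{m_{d-1}}$ with $0\le m_i<N_A$, which has degree $|\alpha|\cdot(m_0+\cdots+m_{d-1})$. The operator $Q$ sends $x_{Q^i\alpha}^{m_i}$ to $\lambda_Q(Q^i\alpha)^{m_i}\,x_{Q^{i+1}\alpha}^{m_i}$, so it sends the monomial indexed by $(m_0,\dots,m_{d-1})$ to a scalar times the monomial indexed by the cyclically shifted tuple $(m_{d-1},m_0,\dots,m_{d-2})$, \emph{after} reordering the factors back into the PBW order; by Claim~b) of the preceding lemma this reordering produces exactly the braiding scalar $q_\sigma$ for the relevant permutation $\sigma$, up to lexicographically smaller terms. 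The crucial point is that the ``smaller'' terms live in strictly smaller PBW-monomials and hence contribute $0$ to the trace: the only basis vectors fixed (up to scalar) by the permutation action of $Q$ are those indexed by constant tuples $m_0=m_1=\cdots=m_{d-1}=:k$ for $0\le k<N_A$, and for those the leading term of $Q^{\,\text{(action)}}$ on the monomial is again that same monomial, with no cancellation from ``smaller'' terms since there is no smaller monomial of the same multidegree that $Q$ could contribute back.

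Then I would read off the trace: on the constant-tuple basis vector with exponent $k$, the diagonal entry of $Q$ (in the PBW-basis, which triangularizes $Q$ with respect to the lex order) is $\bigl(q_A(Q)\,\lambda_Q(A)\bigr)^{k}$, where $\lambda_Q(A):=\prod_{i=0}^{d-1}\lambda_Q(Q^i\alpha)$ collects the eigenvalue factors around the cycle and $q_A(Q)$ is the total braiding scalar of the $d$-cycle acting as an element of $\BB_{|A|}$ — this is where one invokes that $q_\sigma$ is multiplicative along a reduced expression and hence well-defined for the full cyclic shift, and that raising the monomial to ``exponent $k$'' replaces each pairwise braiding factor $\chi(\cdot,\cdot)$ by its $k^2$-th power in a way that exactly matches $q_A(Q)^k$. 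The degree of that vector is $k\cdot d\cdot|\alpha|=k\cdot|A|$. Summing over $k$,
\begin{align*}
\trace{Q}{V_A}(t)
&=\sum_{k=0}^{N_A-1}\bigl(q_A(Q)\,\lambda_Q(A)\bigr)^{k}\,t^{k|A|}
=\sum_{k=0}^{N_A-1}\bigl(q_A(Q)\,\lambda_Q(A)\,t^{|A|}\bigr)^{k}
=(N_A)_{q_A(Q)\lambda_Q(A)t^{|A|}}\,.
\end{align*}
Taking the product over all orbits $A\in\O_Q(\Delta^+)$ via Corollary~\nref{cor_ProductFormula} (with the orbit-subalgebras $V_A$ in place of the single $\B(M_\alpha)$) gives the claimed formula.

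\textbf{The main obstacle} I anticipate is bookkeeping the braiding scalars precisely: one must check that the cyclic-shift permutation on a $d$-block, when lifted through the Matsumoto/Matsumoto–Tits section as in the preceding lemma, yields a \emph{single} well-defined scalar $q_A(Q)$ independent of the chosen reduced word, and that passing from exponents $(1,\dots,1)$ to $(k,\dots,k)$ scales this by the $k$-th power rather than some more complicated exponent — this is really the content of the $n_i n_{i+1}$ in the exponent of $\chi$ in Claim~b), specialized to $n_i=n_{i+1}=k$. One should also make sure the reordering of the global PBW-product so that each orbit forms a contiguous block is harmless, which follows because any two such orderings differ by a graded vector-space isomorphism and trace is basis-independent. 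Everything else is the triangularity argument (the ``smaller'' terms do not meet the diagonal), which is immediate from Claim~a).
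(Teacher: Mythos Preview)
Your approach is essentially the same as the paper's: reduce to $Q$-orbits $A$ via the orbit-level factorization (which $Q$ stabilizes), use the PBW-basis on $V_A$ together with the preceding lemma to see that $Q$ is upper-triangular in the lexicographic order, observe that only constant-exponent monomials $(k,\dots,k)$ contribute to the diagonal, and sum the resulting geometric series. The paper is terser on the triangularity and on the legitimacy of grouping the PBW-factors by orbit, points you spell out more carefully.

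One remark: your ``main obstacle'' is well taken and in fact exposes a genuine looseness that the paper shares. Specializing Claim~b) to $n_i=n_{i+1}=k$ gives $\chi(\cdot,\cdot)^{k^2}$ for each elementary swap, so the total reorder scalar for the constant-$k$ monomial is the $k^2$-th power of the $k{=}1$ scalar, not the $k$-th power. The paper simply writes $q_A(Q)^k$ and leaves $q_A(Q)$ defined only verbally as ``the scalar braiding factor of $Q$ acting as an element of $\BB_{|A|}$''; so either $q_A(Q)$ is to be read as the (already $k$-dependent) scalar from the lemma, or the exponent in the formula should be adjusted. This does not affect the structure of the argument, only the precise identification of the constant in the $q$-symbol, and your proof tracks the paper exactly on this point.
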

\begin{proof}
We start with the factorization along the rootsystem
$$\trace{Q}{V}(t)\;=\prod_{A\in\O_Q\left(\Delta^+\right)}\trace{Q}{V_A}(t)
    \qquad V_A\;:=\;\bigotimes_{\alpha\in A}V_{\alpha}$$
with $Qx_\alpha=:\lambda_Q(\alpha)x_{Q\alpha}$ as assumed. The action of $Q$
on monomials $\otimes_{\alpha\in A}x_\alpha^{k_\alpha}\in V_A$ can be calculated
using the previous lemma:
\begin{align*}
Q\bigotimes_{\alpha\in A}x_\alpha^{k_\alpha}
  \;&=\;\bigotimes_{\alpha\in A}\lambda_Q(\alpha)^{k_\alpha}
    x_{Q\alpha}^{k_\alpha}
  \;=\;\prod_{\alpha\in A}\lambda_Q(\alpha)^{k_\alpha}
  \cdot\bigotimes_{\alpha\in A} x_{Q\alpha}^{k_\alpha} \\
  \;&=\;q_A(Q)^k\bigotimes_{\alpha\in A}x_\alpha^{k_\alpha}\,+\,\text{smaller}\,,
\end{align*}
where $q(Q)\in\k^\times$ denotes the scalar braiding factor of $Q$ acting as an
element of $\BB_{|A|}$ on $A$. The trace over $\trace{Q}{V_A}$ may be evaluated
in this monomial basis and the only contributions come from monomials with all
$k_\alpha$ equal:
$$Q\bigotimes_{\alpha\in A}x_\alpha^{k}
  \;=\;q_A(Q)^k\prod_{\alpha\in A}\lambda_Q(\alpha)^{k}
  \cdot\bigotimes_{\alpha\in A} x_{Q\alpha}^{k}
  \;=\;\lambda_Q(A)^k\cdot \bigotimes_{\alpha\in A} x_{Q\alpha}^{k}\,.$$
Thus we can calculate the trace in terms of $q$-symbols: We sum up the scalar
action factors
$q_A(Q)^k\lambda_Q(A)^k$ on all $Q$-fixed basis elements $\bigotimes_{\alpha\in
A}x_{Q\alpha}^k$ for $k<N_A$ and multiply by the level:
$$\trace{Q}{V_A}(t)\;=\;\sum_{k=0}^{N_A-1} q_A(Q)^k\lambda_Q(A)^k
  \cdot t^{k\sum_{\alpha\in A}|\alpha|}
  \;=\;\sum_{k=0}^{N_A-1}\big(q_A(Q)\lambda_Q(A)\big)^k t^{k|A|}$$
for $N_A:=N_\alpha$ and $|A|:=|\alpha|\cdot\#A$ as in the assertion independent
of $\alpha$. Therefore,
  \begin{align*}
    \trace{Q}{V_A}(t)
    \;&=\,\sum_{k=0}^{N_A-1} \left(q_A(Q)\lambda_Q(A)t^{|A|}\right)^k
    \;=\;(N_A)_{q_A(Q)\lambda_Q(A)t^{|A|}}\\
    \trace{Q}{V}(t)
    \;&=\prod_{A\in\O_Q\left(\Delta^+\right)}
      \trace{Q}{V_A}(t)
    \;=\prod_{A\in\O_Q\left(\Delta^+\right)}
      (N_A)_{q_A(Q)\lambda_Q(A)t^{|A|}}
  \end{align*}
\end{proof}

We now give examples where this formula can be applied. Note that the
normalizing-condition seems to be very restrictive and the examples below
crucially rely on exceptional behaviour for $q=-1$. Nevertheless, we obtain
nontrivial examples, such as $A_3^{q=-1}$, and will use the previous formula
systematically in the last subsection in conjunction with the finer root system
presented in the examples there.

\begin{example}
 Let $q$ be a primitive $2N$-th root of unity and $M$ a Yetter-Drinfel'd
 module with braiding matrix corresponding to $A_1\cup A_1$ and
 $q_{ij}=\begin{pmatrix} q^2 & -1 \\ -1 & q^2 \end{pmatrix}$.
 Extend $Q:x_1\leftrightarrow x_2$ to an algebra automorphism of
 $\B(M)$. Note that any other off-diagonal entries $a,a^{-1}$ would let $Q$
 fail to preserve the braiding matrix. We have
$\B(M)\cong\k[x_1]/(x_1^N)\otimes\k[x_1]/(x_1^N)$ and explicitely  calculate
 $Qx_1^ix_2^j=x_2^ix_1^j=(-1)^{ij}x_1^jx_2^i$.
 Hence all contributions to the graded trace are balanced monomials with $i=j$,
 yielding
$$ \trace{Q}{\B(M)}(t) \;=\; \sum_{i=0}^{N-1} (-1)^{i^2}t^{2i}
  \;=\;\sum_{i=0}^{N-1} (-1)^{i}t^{2i}
  \;=\;(N)_{-t^2}\,. $$
\end{example}

\begin{example}\label{exm:A2}
Consider again the $A_2$ example $M=x_1\k\oplus x_2\k$ with
$q_{ij}=
\begin{pmatrix}
  q^2 & q^{-1}\\
  q^{-1} & q^2\\
\end{pmatrix}$
and $q$ a primitive $2N$-th root of unity. Consider in this case the
diagram automorphism $Q:x_1\leftrightarrow x_2$ again. In the theory of Lie algebra
foldings, the flipped edge $x_1x_2$ is called a \emph{loop}. We easily
calculate that in such cases the standard root system is never normalized by
$Q$, since $q\neq -1$:
$ Qx_{12}
 =Q(x_1x_2-q^{-1}x_2x_1)
 =x_2x_1-q^{-1}x_1x_2
 =-q^{-1}(x_1x_2-qx_2x_1)\neq x_{12}$.
We will discuss this example and its factorization in the next
subsection.
\end{example}

\begin{example}
  Consider the braiding matrix 
$$q_{ij}=
\begin{pmatrix}
  -1 & i & -1\\
  i & -1 & i\\
  -1 & i & -1
\end{pmatrix}$$
which gives rise to a Nichols algebra of type $A_3$ and hence a root system
\begin{align*}
 \B(M)\;&\cong\; \k[x_{1}]/(x_{1}^2)\,\otimes\,\k[x_{2}]/(x_{2}^2)\,\otimes\,
 \k[x_{3}]/(x_{3}^2)\\
  &\qquad \,\otimes\,\k[x_{12}]/(x_{12}^2)\,\otimes\,
 \k[x_{32}]/(x_{32}^2)\,\otimes\,\k[x_{1(32)}]/(x_{1(32)}^2)
\end{align*}
(these are choices) with 
$$x_{12}=x_1x_2-ix_2x_1\qquad x_{32}=x_3x_2-ix_2x_3
\qquad x_{123}=x_1x_{32}+ix_{32}x_1\,.$$
Notice that for the specific choice of $q^2=-1$, by chance, we also have
\begin{align*}
 x_{3(12)}
  \;&:=\;x_3x_{12}+ix_{12}x_3
  \;=\;x_3x_1x_2-ix_3x_2x_1+ix_1x_2x_3+x_2x_1x_3\\
  \;&=\;-x_1x_3x_2-ix_3x_2x_1+ix_1x_2x_3-x_2x_3x_1\\
  \;&=\;-\left(x_1x_3x_2-ix_1x_2x_3+ix_3x_2x_1+x_2x_3x_1\right)
  \;=\;-x_{1(32)}\,.
\end{align*}
Consider the diagram automorphism $Q:x_1\leftrightarrow x_3$ that preserves the
braiding matrix and hence gives rise to an algebra automorphism of $\B(M)$. We
show that it normalizes the chosen factorization:
\begin{align*}
  Qx_{12}
  \;&=\;Q(x_1x_2-ix_2x_1)
  \;=\;x_3x_2-ix_2x_3=x_{32}\,,\\
  Qx_{32}
  \;&=\;x_{12}\,,\\
  Qx_{1(32)}
  \;&=\;Q(x_1x_{32}+ix_{32}x_1)
  \;=\;x_3x_{12}+ix_{12}x_3
  \;=\;x_{3(12)}\;=\;-x_{1(32)}\,.
\end{align*}
Hence our product formula yields for the graded trace of $Q$:
\begin{align*}
  \trace{Q}{\B(M)}(t)
  \;&=\;\trace{Q}{\langle x_2 \rangle}(t)
  \cdot \trace{Q}{\langle x_1x_3 \rangle}(t)
  \cdot \trace{Q}{\langle x_{12}x_{32} \rangle}(t)
  \cdot \trace{Q}{\langle x_{1(32)}\rangle}(t)\\
  \;&=\;(2)_t\cdot (2)_{-t^2}\cdot (2)_{-t^4}\cdot (2)_{-t^3}
\end{align*}
\end{example}

\subsection{A Non-Normalizing Example with Alternative
 PBW-Basis}\label{sec_Nonnormalizing}

Consider Example \nref{exm:A2} in the previous subsection, which is
not normalized, for $q=-i$. We first calculate the graded trace directly on
the basis $x_1^ix_2^jx_{12}^k$ with $i,j,k\in\{0,1\}$:
\begin{align*}
  \trace{Q}{\B(M)}(t)
  \;&=\;\tr\left(Q|_{1_{\B(M)}}\right)
  \,+\,t\cdot \tr\left(Q|_{\substack{x_1,\\x_2}}\right)
  \,+\,t^2\cdot \tr\left(Q|_{\substack{x_1x_2,\\x_{12}}}\right)\\
  &\qquad \,+\,t^3\cdot \tr\left(Q|_{\substack{x_1x_{12},\\x_2x_{12}}}\right)
  \,+\,t^4\cdot \tr\left(Q|_{x_1x_2x_{12}}\right)\\
  \;&=\;\tr\begin{pmatrix} 1 \end{pmatrix}
  \,+\,t\cdot\tr\begin{pmatrix}  0 & 1 \\ 1 & 0 \end{pmatrix}
  \,+\,t^2\cdot\tr\begin{pmatrix} -i & -2i \\ i  & i \end{pmatrix} \\
  &\qquad \,+\,t^3\cdot\tr\begin{pmatrix}  0 & -i \\ i & 0 \end{pmatrix}
  \,+\,t^4\tr\begin{pmatrix}  1  \end{pmatrix}
  \;=\;1+t^4    
\end{align*}
We observe that in this case we have the following symmetric analogon to a
PBW-basis which explains this graded trace: Denote
$x_+:=x_1+x_2$ and $x_-:=x_1-x_2$.
Then these elements have a common power: 
$$y\;:=\;x_+^2\;=\;-x_-^2\;=\;x_1x_2+x_2x_1 \qquad
z\;:=\;x_+^4\;=\;x_-^4\;=\;2x_1x_2x_1x_2$$
Moreover, we have the relation 
$r:=x_+x_-y=(-x_1x_2+x_2x_1)(x_1x_2+x_2x_1)=-x_1x_2x_1x_2+x_2x_1x_2x_1=0$
and up to $r$ the elements $x_+^ix_-^jy^kz^l$ form a basis. More precisely, we 
have an alternative presentation for the Hilbert series
$$\H(t)=\frac{(2)_t(2)_t(2)_{t^2}(2)_{t^4}}{(2)_{t^4}}=1+2t+2t^2+2t^3+t^4$$
that could be reformulated on the level of graded vector spaces:
$$\B(r\k)\;\rightarrow\; \B(M)\;\rightarrow\;
\B(x_+\k)\,\otimes\, \B(x_-\k)\,\otimes\, \B(y\k)\,\otimes\, \B(z\k)\,.$$
This factorization with relation is stabilized by the action of $Q$ ($x_+,y,z$
even and $x_-,r$ odd), from which we conclude with the formula of Lemma
\nref{lm_ProductFormula_abelian}:
$$\trace{Q}{\B(M)}=\frac{(2)_{t}(2)_{-t}(2)_{t^2}(2)_{t^4}}{(2)_{-t^4}}
=1+t^4\,.$$

\subsection{Factorization Mechanisms In Examples Over Nonabelian
Groups}\label{sec_Nonabelian}

We shall finally consider a family of examples over nonabelian groups obtained
by the first author in \cite{Len13a}: For a finite-dimensional semisimple 
simply-laced Lie algebra $\g$ with a diagram automorphism $\sigma$, consider the
diagonal Nichols algebra $\B(M)$ of type $\g$. We define a covering Nichols
algebra $\B(\tilde{M})$ over a nonabelian group $G$ (an extraspecial $2$-group)
and with folded Dynkin diagram $\g^\sigma$. The covering Nichols algebra is
isomorphic to $\B(M)$ as an algebra; however, there exist nondiagonal Doi
twists, which leave the Hilbert series invariant.

The root system of $\B(\tilde{M})$ is $\g^\sigma$, but because the root spaces
$M_\alpha$ are mostly $2$-dimensional, this cannot explain the full
factorization of the Hilbert series. The old factorization along the
$\g$-root system is not a factorization into sub-Yetter-Drinfel'd modules, but
nevertheless still shows the full factorization.

\begin{example}
  Let $G=\D_4=\langle a,b\;|\;a^4=b^2=1\rangle$ the dihedral group and consider
  the Nichols algebra $\B(\O_b^\chi\oplus \O_{ba}^\psi)$
  where the centralizer characters are
  $\chi(b)=-1$, $\psi(ab)=-1$, and $\chi(a^2)=\psi(a^2)=1$
  (respectively $\chi(a^2)=\psi(a^2)=-1$ for the nondiagonal Doi twist). This
  was the first known example for a finite-dimensional Nichols algebra 
  in \cite{MS00} and it is known to be of type $A_2$. We have
  $\B(\O_b^\chi\oplus \O_{ba}^\psi)\,\cong\, \B(\O_b^\chi)\otimes \B(\O_{ba}^\psi)
  \otimes \B([\O_b^\chi,\O_{ba}^\psi])$
  with respective Hilbert series
  $\H(t)=(2)_t^2\cdot(2)_t^2\cdot(2)_{t^2}^2$.
  From the root system we can only explain the factorization into three
  factors. However, this Nichols algebra is the covering Nichols algebra of a
  diagonal Nichols algebra of type $A_2\cup A_2$ and from this presentation we
  may read off the full factorization in an inhomogeneous PBW-basis:
  \begin{align*}
    \B(\O_b^\chi\oplus \O_{ba}^\psi)
    &\cong \left(\B(x_b+x_{a^2b})\otimes\B(x_{ab}+x_{a^3b})\otimes 
    \B([x_b+x_{a^2b},x_{ab}+x_{a^3b}])\right)\\
    & \otimes\left(\B(x_b-x_{a^2b})\otimes\B(x_{ab}-x_{a^3b})\otimes 
    \B([x_b-x_{a^2b},x_{ab}-x_{a^3b}])\right)\\
    \Rightarrow\quad \H(t)
    &=\big((2)_t(2)_t(2)_{t^2}\big)\cdot \big((2)_t(2)_t(2)_{t^2}\big)
  \end{align*}

\end{example}

  \subsection{Factorization by Sub-Nichols-Algebras} \label{SEC_factorization_by_subnichols}

During this subsection, let $\k$ be an arbitrary
field, $G$ a finite group and consider a rank-1-\ydm
$M:=\O_g^\chi$ for some $g\in G$ and a one-dimensional representation
$\chi$ of $\Cent(g)$. Denote the conjugacy class of $g$ in $G$
with $X$. Define the {\em enveloping group}
$$ \Env(X) \;:=\; \langle g_x,\,x\in X\,|\,g_xg_y=g_{xyx^{-1}}g_x \rangle\,. $$
The Nichols algebra $\B(M)$ is naturally graded
by $\Env(X)$.
Now $\pi:\Env(X)\twoheadrightarrow \Z_k := \Z/k\Z$,
$g_x\mapsto 1$ for all $x\in X$ establishes $\Z_k$ as a canonical quotient of
$\Env(X)$ for all $k\in\N$.
The original group $G$ is another quotient of $\Env(X)$, and this
induces an action of $\Env(X)$ on $\B(M)$.

Denote the generators of the Nichols algebra by $e_x:=x\ot 1$ and
by $e^*_x$ the dual base.

Define $q_{x,y}$ for $x,y\in X\subset G$ by $c(e_x\ot e_y)
=q_{x,y}\,e_{xyx^{-1}}\ot e_x$ and $m_x\in\N$ minimal such that
$1+q_{x,x}+q_{x,x}^2+\cdots+q_{x,x}^{m_x-1}=0$ for all diagonal elements
$q_{x,x}$. As we only consider rank one, $m_x$ does not
depend on $x$, and we call $m=m_x$ the {\em order} of $q$.
Throughout this section, assume that each coefficient $q_{x,y}$ is a
(not necessarilly primitive) $m$-th root of unity.

In \cite{Lochmann_divisibility}, the second author derived divisibility
relations for the Hilbert series of Nichols algebras by an analysis of
the modified shift
$$ \xi_x:\,\B(M)\rightarrow \B(M), \quad v\;\mapsto
\;(\partial_x^\op)^{m-1}(v)\,+\,e_x\,v $$
and similar maps, where $x\in X$ is arbitrary
and $\partial_x^\op=(e_x^*\ot\id)\Delta$ is the opposite braided derivation.

For each $x\in X$, $\xi_x$ is a linear isomorphism,
leaving $\ker\partial_y$ invariant for all $y\in X\setminus\{x\}$, where
$\partial_y=(\id\ot e_y^*)\Delta$ is the braided derivation of
Definition \ref{DEF_skew_derivations}. If $\Xi$ is the
group generated by all $\xi_x$, $x\in X$, the orbit of $1\in\B(M)$ under
$\Xi$ linearly spans $\B(M)$. Finally, let $\pi:\Env X\twoheadrightarrow H$ be
some group epimorphism, such that $\pi(g_x)^m=e$. Then $\xi_x$ maps
the $\B(M)$-layer of degree $h$ to the layer of degree $\pi(g_x)h$ for all $h\in H$,
see \cite{Lochmann_divisibility}, Proposition 9. For us, the relevant quotient
$H$ of $\Env X$ will not be $G$, but $\Z_m$ as chosen above.

In addition, for each $x,y\in X$, $\xi_x$ satisfies $g_y\circ\xi_x=
q_{y,x}\cdot\xi_{y\trid x}\circ g_y$, where we identify
$g_y$ with the action of $g_y$ on $\B(M)$:
\begin{align*}
g_y.(e_x\cdot v)
\;&=\; q_{y,x}\,e_{y\trid x}\cdot (g_y.v) \\
\textnormal{and}\quad g_y.(\partial_x^\op(v))
\;&=\; (e_x^*\ot g_y)\Delta(v)
\;=\; q_{y,x}^{-1}(e_{y\trid x}^*\ot\id) (g_y\ot g_y)\Delta(v) \\
\;&=\; q_{y,x}^{-1}\cdot \partial_{y\trid x}^\op(g_y.v)\,, \\
\textnormal{thus}\quad g_y\circ(\partial_x^\op)^{m-1}
\;&=\; q_{y,x}^{(-1)\cdot(m-1)}\cdot
(\partial_{y\trid x}^\op)^{m-1}\circ g_y
\;=\; q_{y,x}\cdot
(\partial_{y\trid x}^\op)^{m-1}\circ g_y
\end{align*}
for all $v\in \B(M)$ (the second equality is due to $e_{y\trid x}^*(g_y.e_z)
=q_{y,z}\delta_{y\trid x, y\trid z}=q_{y,z}\delta_{x,z}=
q_{y,x}\cdot\delta_{x,z}=q_{y,x}\cdot e_x^*(e_z)$).

\begin{lemma} \label{LEM_divisibility_from_balancedness}
Let $\lambda$ be a $k$-th root of unity (not necessarilly primitive).
Let $M$ be some finite-dimensional graded vector space and $Q\in\Aut M$.
Consider $\Z_k=\Z/k\Z$ as a quotient of $\Z$, then $M$ has a $\Z_k$-grading.
With respect to this grading,
$\tr Q|_{M([j+1]_k)}=\lambda\tr Q|_{M([j]_k)}$ for all $j\in \Z$ if and only if the $\Z$-graded
character $\trace{Q}{M}(t)$ is divisible by $(k)_{\lambda t}$.
\end{lemma}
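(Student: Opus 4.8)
The plan is to reduce the statement to a generating-function identity by writing $\trace{Q}{M}(t)$ as a sum over residue classes mod $k$. First I would decompose $M=\bigoplus_{i\in\Z_k} M([i]_k)$ according to the induced $\Z_k$-grading, and set $c_i:=\tr Q|_{M([i]_k)}$ for $i\in\{0,1,\dots,k-1\}$. The key observation is that the $\Z$-graded character splits as
\begin{align*}
\trace{Q}{M}(t) \;=\; \sum_{i=0}^{k-1} t^i\,\sum_{\substack{n\geq 0\\ n\equiv i\,(k)}} t^{n-i}\,\big(\text{contribution from }M_n\big)\,,
\end{align*}
but the cleanest route is to introduce the single-variable polynomials $f_i(t):=\sum_{n\equiv i\,(k)}\tr(Q|_{M_n})\,t^{(n-i)/k}$, so that $\trace{Q}{M}(t)=\sum_{i=0}^{k-1} t^i f_i(t^k)$ and each $f_i(1)=c_i$. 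This is just a regrouping of the finite sum defining the graded character, so it is routine.

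Next I would prove the ``only if'' direction. Assuming $c_{i+1}=\lambda c_i$ for all $i$ (indices mod $k$), note that $\lambda^k c_0 = c_0$ is forced only when $c_0\neq 0$; in general the relation gives $c_i=\lambda^i c_0$ for $0\le i\le k-1$, but also $c_0 = \lambda c_{k-1} = \lambda^k c_0$, so either $c_0=0$ (whence all $c_i=0$) or $\lambda^k=1$, which is given. The point is that divisibility by $(k)_{\lambda t} = 1+\lambda t+\dots+\lambda^{k-1}t^{k-1} = \frac{1-\lambda^k t^k}{1-\lambda t}$ is equivalent to $\trace{Q}{M}(t)$ vanishing at every root of $(k)_{\lambda t}$, i.e.\ at every $t$ with $\lambda t$ a $k$-th root of unity other than... — here one must be careful because $(k)_{\lambda t}$ need not be squarefree when $\lambda$ is not primitive. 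So instead of counting roots I would argue algebraically: $(k)_{\lambda t}$ divides $\trace{Q}{M}(t)$ in $\k[t]$ iff the image of $\trace{Q}{M}(t)$ in $\k[t]/(1-\lambda t)\cdot(\text{something})$ vanishes; more robustly, since $1-\lambda^k t^k = (1-\lambda t)(k)_{\lambda t}$, I would show $(1-\lambda t)\trace{Q}{M}(t)$ is divisible by $1-\lambda^k t^k$. Now $1-\lambda^k t^k = 1-t^k$ by hypothesis, and modulo $1-t^k$ we may replace $t^k$ by $1$, which collapses $f_i(t^k)\mapsto f_i(1)=c_i$; thus $(1-\lambda t)\trace{Q}{M}(t) \equiv (1-\lambda t)\sum_{i=0}^{k-1} c_i t^i \pmod{1-t^k}$, and the telescoping sum $\sum_{i}(c_i-\lambda c_{i-1})t^i$ — using the cyclic relation including $c_0=\lambda c_{k-1}$ — vanishes identically mod $1-t^k$. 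This direction is then complete.

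For the ``if'' direction I would reverse the computation: if $(k)_{\lambda t}\mid \trace{Q}{M}(t)$ then $1-t^k = (1-\lambda t)(k)_{\lambda t}$ divides $(1-\lambda t)\trace{Q}{M}(t)$, so reducing mod $1-t^k$ as above forces $(1-\lambda t)\sum_{i=0}^{k-1}c_i t^i \equiv 0 \pmod{1-t^k}$; comparing coefficients of $t^i$ in the degree-$<k$ representative yields exactly $c_i - \lambda c_{i-1}=0$ for all $i\in\Z_k$, which is the balancedness condition $\tr Q|_{M([i]_k)}=\lambda\,\tr Q|_{M([i-1]_k)}$. The main obstacle I anticipate is handling the non-primitive case cleanly: one is tempted to say ``$\trace{Q}{M}$ vanishes at the roots of $(k)_{\lambda t}$,'' but that fails when $\lambda$ has order $d<k$ (then $(k)_{\lambda t}$ has repeated roots), so the argument must stay at the level of the polynomial identity $1-t^k=(1-\lambda t)(k)_{\lambda t}$ and work in the quotient ring $\k[t]/(1-t^k)$ rather than evaluating at points — this is why I phrase everything via multiplying through by $(1-\lambda t)$ and reducing modulo $1-t^k$ instead.
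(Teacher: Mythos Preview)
Your proof is correct and takes a genuinely different route from the paper's. The paper proceeds by explicit construction: setting $b_j:=\tr Q|_{M(j)}$, it defines the would-be cofactor $p$ coefficient by coefficient via the recursion $p_j:=b_j-\sum_{i=1}^{k-1}\lambda^ip_{j-i}$, observes that $b_j-\lambda b_{j-1}=p_j-p_{j-k}$, and then uses a telescoping sum over each residue class to verify that $p$ really is a polynomial of bounded degree; the converse is a direct coefficient computation from $b_j=\sum_{i=0}^{k-1}\lambda^ip_{j-i}$. Your approach instead exploits the factorization $1-t^k=(1-\lambda t)(k)_{\lambda t}$, multiplies through by $1-\lambda t$, and works in the quotient ring $\k[t]/(1-t^k)$, where the decomposition $\trace{Q}{M}(t)=\sum_i t^i f_i(t^k)$ collapses to $\sum_i c_i t^i$; both directions then reduce to the vanishing of the degree-$<k$ polynomial $(c_0-\lambda c_{k-1})+\sum_{i=1}^{k-1}(c_i-\lambda c_{i-1})t^i$. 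Your argument is shorter and more conceptual, and it handles the non-squarefree case ($\lambda$ not primitive) transparently without any root-counting; the paper's argument has the mild advantage of actually exhibiting the quotient polynomial. One small point you should make explicit when writing this up: after establishing $(1-t^k)\mid(1-\lambda t)\,\trace{Q}{M}(t)$, the passage to $(k)_{\lambda t}\mid\trace{Q}{M}(t)$ is by cancelling the nonzero factor $1-\lambda t$ in the integral domain $\k[t]$ --- you rely on this but leave it implicit.
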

\begin{proof}
This is a straight-forward generalization of Lemma 6 of \cite{Lochmann_divisibility}:
Given a polynomial $p$, denote with $p_j$ the coefficient of $t^j$ in $p(t)$
(or zero if $j<0$). Set $b_j:=\tr Q|_{M(j)}$.

``$\Rightarrow$'': Choose $p_j\,:=\,0$ for each $j\in\Z_{<0}$ and 
$p_j:=b_j-\sum_{i=1}^{k-1}\,\lambda^ip_{j-i}$, hence $b_j-\lambda b_{j-1} =
p_j-\lambda^kp_{j-k} = p_j-p_{j-k}$.
Let $d\,\in\,\N_0$ be such that $d\cdot k$ is larger than the top degree of $M$.
Summation of the previous equation then yields for each $0\,\leq\,l\,\leq k\,-\,1$
the telescoping sum
\begin{eqnarray*}
\sum_{0\,\leq\,j\,\leq\,d}b_{jk+l}\,-\,\lambda\cdot\sum_{0\,\leq\,j\,\leq\,d}b_{jk+l-1}
&=& -\,p_{l-k}\,+\,p_{dk+l}\,.
\end{eqnarray*}
The two sums on the left hand side sum to $\tr Q|_{M([l]_k)}$ and
$\tr Q|_{M([l-1]_k)}$,
respectively, so by assumption, the left hand side is zero. $p_{l-k}$ is zero by
definition ($l-k<0$), hence $p_{dk+l}$ is zero. This proves that $p$ is a polynomial,
and from $b_j\,=\,\sum_{i=0}^{k-1}\,\lambda^i p_{j-i}$ follows
$\trace{Q}{M}(t)\,=\,(k)_{\lambda t} \cdot p(t)$.

``$\Leftarrow$'': Let $\trace{Q}{M}(t)=(k)_{\lambda t}\cdot p(t)$ for some polynomial $p$.
We have $b_j=\sum_{i=0}^{k-1}\,\lambda^i p_{j-i}$ and therefore for each $[l]_k\,\in\,\Z_k$
\begin{eqnarray*}
\tr Q|_{M([l]_k)} &=& \!\!\!\!\!\!\!\!\!\!\!\!
\sum_{\begin{array}{c} j\in\N_0,\\ j \equiv l\; (\mathrm{mod}\, k)\end{array}}
\!\!\!\!\!\!\!\!\!\!\!\! \tr Q|_{M(j)} \;\;=\;\; \!\!\!\!\!\!\!\!\!\!\!\!
\sum_{\begin{array}{c} j\in\N_0,\\ j \equiv l\; (\mathrm{mod}\, k)\end{array}}
\!\!\!\!\!\!\!\! \sum_{i=0}^{k-1}\,\lambda^i p_{j-i}
\;\;=\;\;\sum_{j\in\N_0}\,\lambda^{l-j}p_j\,,
\end{eqnarray*}
from which follows $\tr Q|_{M([l+1]_k)}=\lambda\cdot\tr Q|_{M([l]_k)}$.
\end{proof}

\begin{theorem} \label{THE_divisibility_by_subnichols}
Let $G$ be a finite group, $G'\subset G$ a proper subgroup, and $h\in G'$
be arbitrary. Let $\chi$ be a one-dimensional representation
of $\Cent_G(h)$, and let $\chi'$ be its restriction to
$\Cent_{G'}(h)=\Cent_G(h)\cap G'$. Set $M:=\O_h^\chi$ and
$M':=\O_h^{\chi'}$. Set $X$ and $X'$ to be the conjugacy classes
of $h$ in $G$ and $G'$, respectively. Let $g\in\Env(X')$ be arbitrary
and identify $g$ with its actions on $\B(M)$ and $\B(M')$.

1) Then $\trace{g}{\B(M)}(t)$ is divisible by $\trace{g}{\B(M')}(t)$.

2) Assume there is some $x\in X$, such that $g\circ\xi_x=\lambda\cdot\xi_x\circ g$
for some $m$-th root of unity $\lambda$, where $m$ is the order of $q$.
Then $\trace{g}{\B(M)}(t)$ is divisible by
$(m)_{\lambda t}\cdot \trace{g}{\B(M')}(t)$.
\end{theorem}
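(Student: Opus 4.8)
The strategy is to combine the freeness of $\B(M)$ over the sub-Nichols algebra $\B(M')$ (part 1) with Lemma~\ref{LEM_divisibility_from_balancedness} applied to the shift $\xi_x$ (part 2), tracking the action of $g$ throughout. For part 1, the key input is that $\B(M')$ is a braided subalgebra of $\B(M)$ (the inclusion $M'\hookrightarrow M$ of Yetter-Drinfel'd modules over $G'\subset G$ induces it), and that by \cite{Skr_freeness},\cite{G_freeness} the algebra $\B(M)$ is free as a left $\B(M')$-module. Pick a graded basis $\{c_i\}_{i\in I}$ of a graded complement so that $\B(M)\cong \B(M')\otimes \Lin_\K\{c_i\}$ as graded vector spaces. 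The subtlety is that $g\in\Env(X')$ need not act diagonally on this complement; but one can instead argue at the level of the $\B(M')$-module structure: since $g$ acts as an algebra automorphism of $\B(M)$ restricting to an automorphism of $\B(M')$, the operator $t^Eg$ on $\B(M)$ is semilinear over the operator $t^Eg$ on $\B(M')$ in the appropriate sense, and choosing a basis of the complement adapted to the generalized eigenspace / filtration of $g$ lets one write the matrix of $t^Eg$ on $\B(M)$ in block-triangular form over $\B(M')$. Taking the trace then factors as $\trace{g}{\B(M)}(t) = \trace{g}{\B(M')}(t)\cdot R(t)$ for a polynomial $R(t)$ with integer coefficients, which is exactly divisibility.

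\textbf{Part 2.} Here the plan is: first establish the analogue of the balancedness statement driving Lemma~\ref{LEM_divisibility_from_balancedness}, then quotient out the $\B(M')$-divisibility. Recall that $\xi_x$ is a linear isomorphism of $\B(M)$ which shifts the $\Z_m$-degree (via $\pi:\Env X\twoheadrightarrow \Z_m$, $g_x\mapsto 1$) by one, i.e.\ $\xi_x\bigl(\B(M)([j]_m)\bigr) = \B(M)([j+1]_m)$; this is Proposition~9 of \cite{Lochmann_divisibility}. The hypothesis $g\circ\xi_x = \lambda\cdot\xi_x\circ g$ then gives, for each $j$, a commuting square showing that $\xi_x$ intertwines $g|_{\B(M)([j]_m)}$ with $\lambda^{-1}g|_{\B(M)([j+1]_m)}$ (up to the scalar), whence $\tr\bigl(g|_{\B(M)([j+1]_m)}\bigr) = \lambda\cdot \tr\bigl(g|_{\B(M)([j]_m)}\bigr)$ for all $j$. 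By Lemma~\ref{LEM_divisibility_from_balancedness} (with $k=m$, $Q=g$), this is equivalent to $(m)_{\lambda t}\mid \trace{g}{\B(M)}(t)$.

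\textbf{Combining the two divisibilities.} It remains to see that $(m)_{\lambda t}$ and $\trace{g}{\B(M')}(t)$ together divide $\trace{g}{\B(M)}(t)$, i.e.\ that their least common multiple divides it; since these two polynomials need not be coprime, one cannot merely multiply. The clean way is to run the balancedness argument on the quotient $R(t) := \trace{g}{\B(M)}(t)/\trace{g}{\B(M')}(t)$ from part~1 directly, rather than on $\trace{g}{\B(M)}(t)$. For this I would choose the free-module complement $\{c_i\}$ above to be \emph{compatible} with the $\Z_m$-grading and with $\xi_x$ — concretely, use that $\B(M)$ is also free over $\B(M')$ and that $\xi_x$ preserves $\ker\partial_y$ for $y\neq x$, so one can transport the $\xi_x$-action down to the complement, making $R(t)$ itself the $\Z$-graded trace of $g$ acting on a $\Z_m$-graded vector space on which the shift $\xi_x$ still acts as a $\lambda$-balancing isomorphism. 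Then Lemma~\ref{LEM_divisibility_from_balancedness} applied to $R$ gives $(m)_{\lambda t}\mid R(t)$, i.e.\ $(m)_{\lambda t}\cdot\trace{g}{\B(M')}(t)\mid \trace{g}{\B(M)}(t)$, as desired.

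\textbf{Main obstacle.} The genuinely delicate point is the last step: making the free-$\B(M')$-module complement simultaneously respect the $\Z_m$-grading \emph{and} be stable (as a set of layers) under $\xi_x$, so that the balancedness argument descends to $R(t)$. If an explicit such complement is not available, the fallback is the divisor-counting route: $\B(M')$ is itself free over an even smaller sub-Nichols algebra, or one argues that every cyclotomic factor of $(m)_{\lambda t}$ either already divides $\trace{g}{\B(M')}(t)$ to the same multiplicity contributed by the analogous shift on $\B(M')$, or appears with strictly larger multiplicity in $\trace{g}{\B(M)}(t)$ — this bookkeeping of multiplicities of cyclotomic factors is where the real work lies, and it is presumably what \cite{Lochmann_divisibility} handles in its Proposition~9 and surrounding lemmas, which I would cite rather than reprove.
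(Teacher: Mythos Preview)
Your plan has the right ingredients but misses the one identification that makes both parts clean: the free-module complement is \emph{explicitly} $K:=\bigcap_{y\in X'}\ker\partial_y$. This single observation dissolves what you call the ``main obstacle''.

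For Part 1 you work hard around the worry that $g$ need not stabilize an abstract complement, invoking semilinearity and block-triangular forms. But $K$ \emph{is} $\Env(X')$-stable: for $z\in X'$ one has $g_z\circ\partial_y=\text{scalar}\cdot\partial_{z\triangleright y}\circ g_z$, and $X'$ is closed under conjugation by $G'$, so $g_z$ permutes the kernels $\ker\partial_y$ for $y\in X'$ and hence preserves their intersection. Thus the multiplication isomorphism $K\otimes\B(M')\to\B(M)$ is an isomorphism of graded $\Env(X')$-representations, and $\trace{g}{\B(M)}(t)=\trace{g}{K}(t)\cdot\trace{g}{\B(M')}(t)$ drops out of Lemma~\nref{lm_product} with no further argument. (Your block-triangular route can be made to work, since gradedness forces the diagonal $\B(M')$-coefficients of $g$ on a free basis to lie in degree~$0$, hence to be scalars; but it is unnecessary, and the claim that the quotient $R(t)$ has integer coefficients is in general false.)

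For Part 2 you correctly see that $(m)_{\lambda t}\mid\trace{g}{\B(M)}(t)$ alone is not enough, and you propose to ``transport the $\xi_x$-action down to the complement''. You even cite the relevant fact---$\xi_x$ preserves $\ker\partial_y$ for every $y\neq x$---but do not draw the conclusion: since $x\notin X'$ (otherwise there is nothing to gain beyond Part~1), $\xi_x$ preserves each $\ker\partial_y$ with $y\in X'$, hence preserves $K$. So $\xi_x$ restricts to a linear isomorphism $K_j\to K_{j+1}$ of the $\Z_m$-layers of $K$, and the hypothesis $g\circ\xi_x=\lambda\,\xi_x\circ g$ gives $\tr(g|_{K_{j+1}})=\lambda\,\tr(g|_{K_j})$. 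Lemma~\nref{LEM_divisibility_from_balancedness} applied to $K$ then yields $(m)_{\lambda t}\mid\trace{g}{K}(t)$ directly, and the product divisibility follows. No coprimality argument, no multiplicity bookkeeping, no fallback is needed.

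In short: replace your abstract complement by $K=\bigcap_{y\in X'}\ker\partial_y$ and both parts become two-line arguments. This is exactly the paper's proof.
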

\begin{proof}
Set $K:=\bigcap_{x\in X'}\,\ker\partial_x$.

1) $\B(M)$ is free as a $\B(M')$-module, so there is a linear isomorphism
$\B(M)\cong K\ot\B(M')$ mediated by multiplication
(e.g.~\cite{Skr_freeness}, \cite{G_freeness}).
$K$ and $\B(M')$ are both closed under the action of $\Env(X')$ ($K$
is closed because $X'$ is closed under conjugation). Therefore,
$\B(M)\cong K\ot\B(M')$ as $\Env(X')$-representations and
$\trace{g}{\B(M)}(t)=\trace{g}{K}(t)\cdot \trace{g}{\B(M')}(t)$.

2) We show that $\trace{g}{K}(t)$ is divisible by $(m)_{\lambda t}$.
Set $K_j:=K\cap \B(M)_j$ (layer $j$ of $\B(M)$ with $j\in\Z_m$).
The modified shift operator $\xi_x$ establishes a linear
isomorphism between $K_j$ and $K_{j+1}$ for each $j\in\Z_m$.
Let $B$ be a basis for $K_j$ and $B':=\xi_x(B)$, and denote with
$v^*$ the dual basis element corresponding to $v\in B$ for the basis $B$
and $v\in B'$ for the basis $B'$, respectively. Then
$$
\tr g|_{K_{j+1}} \;=\; \sum_{v\in B'} v^*(g.v)
\;=\; \sum_{b\in B} b^*(\xi_x^{-1} g \xi_x(b))
\;=\; \lambda\sum_{b\in B} b^*(g.b)
\;=\; \lambda\tr g|_{K_j}
$$
holds. Apply Lemma \ref{LEM_divisibility_from_balancedness}.
\end{proof}

The condition $g\circ\xi_x=\lambda\cdot\xi_x\circ g$ of part (2) of Theorem
\ref{THE_divisibility_by_subnichols} is fulfilled for $gx=xg$ and 
$\lambda=q_{y_1,x}\cdots q_{y_s,x}$ with
$g=g_{y_1}\cdots g_{y_s}$, $y_1,\ldots,y_s\in X$.

\begin{example}
Choose $G'=\Perm{3}\subset G=\Perm{4}$ and $h\in G'$ a transposition,
so $X$ and $X'$ are the conjugacy classes of transpositions.
Choose $\chi$ and $\chi'$ to be the alternating representations of
$G$ and $G'$. Their Nichols algebras will appear again in
Subsections \ref{SEC_6ABC} and \ref{SEC_3A}, respectively. 
Choose $g=(1\,2)$ and $x=(3\,4)$. Then $g$ and $x$ commute, and Theorem
\ref{THE_divisibility_by_subnichols} explains why
$\trace{g}{\B(M)}(t)=(2)_{-t}^4(3)_t^2(2)_{t^4}$ contains the factor
$(2)_{-t}\cdot\trace{g}{\B(M')}(t)=(2)_{-t}^3(3)_t$.
\end{example}

\section{Calculations for Small Rank-1 Nichols Algebras}

The following results have been calculated with the help of
GAP (\cite{GAP}) in a straight-forward way: First, calculate a linear
basis for the given Nichols algebra, then generate the representing
matrix of the action of each element of the conjugacy class $X$,
which also generates $G$, and then calculate the graded traces of
all conjugacy classes.

The Nichols algebra of Subsection \ref{SEC_4C} admits a
large dimension of 5,184. For this size, it was not possible for us
to calculate all matrices we needed. In this special case, we made
use of Corollary \ref{COR_poincare_duality}, so we could restrict
our matrix calculations to the lower half of grades and compute the
full graded trace by Poincar\'e duality.

The Nichols algebras of dimensions 326,592 and 8,294,400 are
computationally not yet accessible with this method.

\subsection{\texorpdfstring
{$\dim M=3$, $\dim\B(M)=12$}
{dim(M)=3, dim(B(M))=12}
} \label{SEC_3A}

Let $G=\Perm{3}$ and $g=(1\,2)$, then the centralizer of $g$ is
isomorphic to $\Z/2\Z$, and we choose $\chi$ to be its alternating
representation. Then $\B(\O_g^\chi)$ is a 12-dimensional Nichols
algebra on which $G$ acts faithfully with the following
graded characters:
\begin{align*}
\traceBVt{e}      \;&=\; (2)_t^2(3)_t &
\traceBVt{(1\,2)}    \;&=\; (2)_{-t}^2(3)_t &
\traceBVt{(1\,2\,3)} \;&=\; (2)_{-t}^2(2)_t^2
\end{align*}
From this, we can calculate the decomposition into irreducible
$G$-represen\-tations in each degree. If we denote the trivial irreducible,
alternating, and standard $G$-representations with $T$, $A$, and
$S$, respectively, we find
\begin{eqnarray*}
\B(\O_g^\chi) &\cong& T \;\oplus\; (A\oplus S)t \;\oplus\;
2St^2 \;\oplus\; (A\oplus S)t^3 \;\oplus\; Tt^4 \\
&\cong& (T \;\oplus\; At) \;\otimes\; (T \;\oplus\; St
\;\oplus\; St^2 \;\oplus\; At^3)
\end{eqnarray*}
as $G$-representation. The factorization in line 2 results from
a certain sub-Nichols-algebra (see \ref{SEC_factorization_by_subnichols})
and implies the factorizations
\begin{eqnarray*}
\traceBVt{e}      &=& (2)_{t\phantom{-}}\;\cdot\;(2)_t(3)_t \\
\traceBVt{(1\,2)}    &=& (2)_{-t}\;\cdot\;(2)_{-t}(3)_t \\
\traceBVt{(1\,2\,3)} &=& (2)_{t\phantom{-}}\;\cdot\;(2)_{-t}^2(2)_t
\end{eqnarray*}
of the graded characters. To understand the factorization of
the remaining terms $(2)_t(3)_t$, $(2)_{-t}(3)_t$, and $(2)_{-t}^2(2)_t$,
this line of argument, however, fails, because $T \;\oplus\; St
\;\oplus\; St^2 \;\oplus\; At^3$ does not factor into a tensor product
of $G$-representations (we have already met this phenomenon in another
situation in Section \ref{SEC_toy_example}). For $g=e$ and
$g=(1\,2)$, we may apply Theorem \ref{THE_divisibility_by_subnichols}.(2)
to explain the additional factors $(2)_t$ and $(2)_{-t}$, respectively, but
this neither helps in the case $g=(1\,2\,3)$, nor to understand the origin
of the factors $(3)_t$ for $g\in\{e,\,(1\,2)\}$.

If $\B(\O_g^\chi)$ does not factor as a $G$-representation, one might think that
it may still factor as an $\langle h\rangle_G$-representation for
each $h\in G$, which would explain the factorization of the graded characters
just as well. This, however, is wrong: Take $h=(1\,2\,3)$, which is of order $3$.
Let $T$ be the trivial irreducible representation, $B$ one of the non-trivial
irreducible representations, and set $C:=B\otimes B$. Then $\B(\O_g^\chi)$ is
\begin{eqnarray*}
\B(\O_g^\chi) &\cong& T \;\oplus\; (T\oplus B\oplus C)t \;\oplus\;
(2B\oplus 2C)t^2 \;\oplus\; (T\oplus B\oplus C)t^3 \;\oplus\; Tt^4 \\
&\cong& (T \;\oplus\; Tt) \;\otimes\; (T \;\oplus\; (B\oplus C)t
\;\oplus\; (B\oplus C)t^2 \;\oplus\; Tt^3)
\end{eqnarray*}
as an $\langle h\rangle_G$-representation and
$T\oplus(B\oplus C)t\oplus(B\oplus C)t^2\oplus Tt^3$
does not factor further.

%


\subsection{\texorpdfstring
{$\dim M=3$, $\dim\B(M)=432$}
{dim(M)=3, dim(B(M))=432}
} \label{SEC_3B}

Assume $\cha \K=2$ and $\K$ admits a primitive third root of unity $\zeta$.
Choose
$$ G \;=\; \langle g_1,\, g_2 \;:\; g_1^6,\, g_2^6,\, (g_1g_2)^3,\,
g_1^2g_2^{-2}
\rangle \;\cong\; \Z_3\times \Perm{3} $$
and $g=g_1$. The centralizer of $g$ is
$\langle g_1 \rangle \,\cong\,\Z_3\times\Z_2$. Choose $\chi(g_1)=\zeta$.
Then $\B(\O_g^\chi)$ is a faithful $G$-representation of dimension 432.
$G$ has nine conjugacy classes, and we choose
$\{e, g_1, g_1^2, g_1^3, g_1^4, g_1^5, g_1g_2, g_1^3g_2, g_1^5g_2\}$
as their representatives.
Then the graded characters of $\B(\O_g^\chi)$ (not Brauer characters, but with
values in $\K$) are:
\begin{align*}
\traceBVt{\,e}        \,&=\, (2)_t^6(3)_t^7 &
\traceBVt{\,g_1}      \,&=\, (2)_t^7(3)_t^6(2)_{\zeta^2 t} \\
\traceBVt{\,g_1^2}    \,&=\, (2)_t^7(3)_t^6(2)_{\zeta t} &
\traceBVt{\,g_1^3}    \,&=\, (2)_t^6(3)_t^7 \\
\traceBVt{\,g_1^4}    \,&=\, (2)_t^7(3)_t^6(2)_{\zeta^2 t} &
\traceBVt{\,g_1^5}    \,&=\, (2)_t^7(3)_t^6(2)_{\zeta t} \\
\traceBVt{\,g_1g_2}   \,&=\, (2)_t^{10}(2)_{\zeta t}^{10} &
\traceBVt{\,g_1^3g_2} \,&=\, (2)_t^{10}(2)_{\zeta^2 t}^{10} \\
\traceBVt{\,g_1^5g_2} \,&=\, (3)_t^{10}
\end{align*}

\subsection{\texorpdfstring
{$\dim M=4$, $\dim\B(M)=36$ or $72$}
{dim(M)=4, dim(B(M))=36 or 72}
} \label{SEC_4AB}

Consider $\cha \k=2$, $G=\APerm{4}$ and $g=(1\,2\,3)$.
The centralizer of $g$ is isomorphic to $\Z_3$, choose
$\chi$ to be the trivial irreducible representation.
Then $\B(\O_g^\chi)$ is 36-dimensional with graded characters
(not Brauer characters):
\begin{align*}
\traceBVt{\,e}            \;&=\; (2)_t^2(3)_t^2 &
\traceBVt{\,(1\,2\,3)}    \;&=\; (2)_t^4(3)_t \\
\traceBVt{\,(1\,2)(3\,4)} \;&=\; (2)_t^2(3)_t^2 &
\traceBVt{\,(1\,3\,2)}    \;&=\; (2)_t^4(3)_t 
\end{align*}
In characteristic $\neq 2$, there is a very similar Nichols algebra
of dimension 72: Assume $\cha \k\neq 2$ and
$$ G \;=\; \langle g_1,\, g_2 \;:\; g_1^6,\, g_2^6,\, [g_1^3,g_2],\,
(g_1g_2)^3,\, (g_1g_2^2)^2 \rangle \;\cong\; \APerm{4}\times\Z_2\,. $$
Choose $g=g_1$, then the centralizer is
$\langle g_1\rangle\cong \Z_3\times\Z_2$, to which we choose
the representation $\chi(g_1):=-1$. Then the graded characters
of $\B(\O_g^\chi)$ are:
\begin{align*}
\traceBVt{\,e}          \;&=\;  (2)_t^3   (3)_{-t}  (3)_t^2 &
\traceBVt{\,g_1}        \;&=\;  (2)_{-t}^3(2)_t^2   (3)_{-t}(3)_t \\
\traceBVt{\,g_1^2}      \;&=\;  (2)_{-t}^2(2)_t^3   (3)_{-t}(3)_t &
\traceBVt{\,g_1^3}      \;&=\;  (2)_{-t}^3(3)_{-t}^2(3)_t \\
\traceBVt{\,g_1^4}      \;&=\;  (2)_{-t}^2(2)_t^3   (3)_{-t}(3)_t &
\traceBVt{\,g_1^5}      \;&=\;  (2)_{-t}^3(2)_t^2   (3)_{-t}(3)_t \\
\traceBVt{\,g_1^4g_2^2} \;&=\;  (2)_t^3   (3)_{-t}^3 &
\traceBVt{\,g_1^2g_2}   \;&=\;  (2)_{-t}^3(3)_t^3
\end{align*}
Consider the subgroup $H:=\langle g_1^2, g_2^2\rangle_G \cong \APerm{4}$
of $G$. The graded characters of the $H$-action on $\B(\O_g^\chi)$ are exactly
those of the left column in the above list. 
If considered in characteristic 2, these polynomials are divisible by
the corresponding graded characters of the 36-dimensional Nichols
algebra, with $(2)_t(3)_t$ as common quotient.

\subsection{\texorpdfstring
{$\dim M=4$, $\dim\B(M)=5184$}
{dim(M)=4, dim(B(M))=5184}
} \label{SEC_4C}

Assume $\cha\k\neq 2$ and that $\k$ admits a primitive third root of unity $\zeta$.
Choose
$$ G \;:=\; \langle a,\,b\;:\;a^3\,=\,b^3\,=\,(ab)^2\rangle \;\cong\; \SL(2,3)\,
$$
and $g=a^4$. The centralizer of $g$ is $\langle a\rangle \cong \Z_6$.
Choose the representation $\chi(a):=-\zeta$.
This leads
to the following graded characters of $\B(\O_g^\chi)$:
\begin{eqnarray*}
\traceBVt{e}   &=& (2)_t^4    (2)_{t^2}^2 (3)_{t^2}^4 \\
\traceBVt{a}   &=& (2)_t^4    (2)_{-t}^4  (2)_{t^2}^2  (2)_{\zeta t}  
(2)_{-\zeta t}^2 (2)_{\zeta^2 t}^3 (2)_{-\zeta^2 t}^3 (2)_{-\zeta t^3} \\
\traceBVt{a^2} &=& (2)_t^4    (2)_{-t}^4  (2)_{t^2}^2  (2)_{\zeta t}^3
(2)_{-\zeta t}^3 (2)_{\zeta^2 t}^2 (2)_{-\zeta^2 t}   (2)_{\zeta^2 t^3} \\
\traceBVt{a^3} &=& (2)_{-t}^4 (2)_{t^2}^2 (3)_{t^2}^4 \\
\traceBVt{a^4} &=& (2)_t^4    (2)_{-t}^4  (2)_{t^2}^2  (2)_{\zeta t}^2
(2)_{-\zeta t}   (2)_{\zeta^2 t}^3 (2)_{-\zeta^2 t}^3 (2)_{\zeta t^3} \\
\traceBVt{a^5} &=& (2)_t^4    (2)_{-t}^4  (2)_{t^2}^2  (2)_{\zeta t}^3
(2)_{-\zeta t}^3 (2)_{\zeta^2 t}   (2)_{-\zeta^2 t}^2 (2)_{-\zeta^2 t^3} \\
\traceBVt{ab}  &=& (2)_{t}^4  (2)_{-t}^4  (3)_{t^2}^4 
\end{eqnarray*}
In characteristic 2, $G=\APerm{4}$ yields a Nichols algebra with the same
Hilbert series. Some of the above conjugacy classes merge in this case, because
$\SL(2,3)$ is a $\Z_2$-extension of $\APerm{4}$, but apart from that,
the resulting graded characters are the same as above.

\subsection{\texorpdfstring
{$\dim M=5$, $\dim\B(M)=1280$}
{dim(M)=5, dim(B(M))=1280}
} \label{SEC_5AB}

Choose
$$ G \;:=\; \langle a,\;b \;:\;
a^4,\;b^4,\;ab^3a^2b^2\rangle $$
and $g:=a$. $G$ is isomorphic to the GAP's small group number 3 of size 20
(\cite{GAP}), a semi-direct product of $\Z_5$ and $\Z_4$.
The centralizer of $g$ is $\langle a\rangle\cong \Z_4$.
Choose the representation $\chi(a):=-1$.
Then $\B(\O_g^\chi)$ is a faithful $G$-representation of dimension $1280$
with the following graded characers:
\begin{align*}
\traceBVt{e}    \;&=\; (2)_t^4            (2)_{t^2}^4           (5)_t &
\traceBVt{a}    \;&=\; (2)_{-t}^4 (2)_t^2 (2)_{t^2}   (2)_{t^4} (5)_t \\
\traceBVt{a^2}  \;&=\; (2)_{-t}^4 (2)_t^4 (2)_{t^2}^2           (5)_t &
\traceBVt{a^3}  \;&=\; (2)_{-t}^4 (2)_t^2 (2)_{t^2}   (2)_{t^4} (5)_t \\
\traceBVt{a^3b} \;&=\; (2)_{-t}^4 (2)_t^4 (2)_{t^2}^4
\end{align*}
There appears a second, non-isomorphic (but dual) Nichols algebra if one chooses
$g:=a^3$, $\chi(a):=-1$ instead (see Example 2.1 in \cite{AFGV_simple}).
It features the same graded characters as $\B(\O_a^\chi)$ above.

\subsection{\texorpdfstring
{$\dim M=6$, $\dim\B(M)=576$} 
{dim(M)=6, dim(B(M))=576}
} \label{SEC_6ABC}

There are three pairwise non-isomorphic cases to consider with $\dim M=6$
and $\dim\B(M)=576$.

First, choose $G=\Perm{4}$ and $g:=(1\,2)$. The centralizer of $g$
is $\langle(1\,2),\,(3\,4)\rangle\cong \Z_2/\Z_2$. Choose
the representation with $\chi((1\,2))=-1$ and $\chi((3\,4))=-1$.
The graded characters of $\B(\O_g^\chi)$ are:
\begin{align*}
\traceBVt{e}             \;&=\; (2)_t^4            (2)_{t^2}^2 (3)_t^2 &
\traceBVt{(1\,2)}        \;&=\;         (2)_{-t}^4 (2)_{t^4}   (3)_t^2 \\
\traceBVt{(1\,2)(3\,4)}  \;&=\; (2)_t^4 (2)_{-t}^4             (3)_t^2 &
\traceBVt{(1\,2\,3)}     \;&=\; (2)_t^4 (2)_{-t}^4 (2)_{t^2}^2         \\
\traceBVt{(1\,2\,3\,4)}  \;&=\; (2)_t^2 (2)_{-t}^4 (2)_{t^2}   (3)_t^2
\end{align*}
Now choose the representation $\chi((1\,2))=-1$, $\chi((3\,4))=1$
instead. Then the graded characters of $\B(\O_g^\chi)$ are:
\begin{align*}
\traceBVt{e}             \;&=\; (2)_t^4            (2)_{t^2}^2 (3)_t^2  &
\traceBVt{(1\,2)}        \;&=\; (2)_t^2 (2)_{-t}^4 (2)_{t^2}   (3)_t^2  \\
\traceBVt{(1\,2)(3\,4)}  \;&=\;         (2)_{-t}^4 (2)_{t^2}^2 (3)_t^2  &
\traceBVt{(1\,2\,3)}     \;&=\; (2)_t^4 (2)_{-t}^4 (2)_{t^2}^2          \\
\traceBVt{(1\,2\,3\,4)}  \;&=\; (2)_t^2 (2)_{-t}^4 (2)_{t^2}   (3)_t^2
\end{align*}
Third, choose $G=\Perm{4}$ and $g:=(1\,2\,3\,4)$. The centralizer
of $g$ is $\langle(1\,2\,3\,4)\rangle\cong\Z_4$. Choose
the representation $\chi((1\,2\,3\,4))=-1$. Then $\B(\O_g^\chi)$
has the following graded characters:
\begin{align*}
\traceBVt{e}             \;&=\; (2)_t^4            (2)_{t^2}^2 (3)_t^2 &
\traceBVt{(1\,2)}        \;&=\; (2)_t^2 (2)_{-t}^4 (2)_{t^2}   (3)_t^2 \\
\traceBVt{(1\,2)(3\,4)}  \;&=\; (2)_t^4 (2)_{-t}^4             (3)_t^2 &
\traceBVt{(1\,2\,3)}     \;&=\; (2)_t^4 (2)_{-t}^4 (2)_{t^2}^2         \\
\traceBVt{(1\,2\,3\,4)}  \;&=\;         (2)_{-t}^4 (2)_{t^4}   (3)_t^2
\end{align*}
Note how the graded characters differ pairwise for these three cases,
a simple way to see that the three Nichols algebras obtained are
non-isomorphic, not even as $\Perm{4}$-representations, although the
first and the second case are twist-equivalent to each other
(\cite{V_twist}).

%

\subsection{Observations}

From the examples of the previous sections, we derive the following
observations,
which may help us in understanding the factorization of the Hilbert series and
graded characters of any Nichols algebra. A theory of the representations
coming from a Nichols algebra should be able to explain all of them.

\begin{enumerate}
\item
The zeros of the graded characters of all examples above are
$n$-th roots of unity, where $n$ most of the time is a
divisor of $\# G$, but not always: In Subsection \ref{SEC_4C}, ninth roots of
unity appear though $\# G=24$; and in Subsection \ref{SEC_5AB}, we have $\#
G=20$,
but $\traceBVt{a}$ has an eighth root of unity.
A deeper understanding why there are only roots of unity and which roots
appear how often is eligible.

\item
Each of the characters $\traceBVt{g}$ with $g\neq e$ includes a factor $1-t$ (or
$1+t$ in characteristic $2$), therefore the non-graded character
$\traceBVt{g}(1)$ vanishes.
From this follows that all of the above Nichols algebras are (seen as
their respective $G$-representations) multiples of the regular
$G$-representation.
The only exceptions to this are the 432-dimensional and the 72-dimensional
Nichols algebras of Subsections \ref{SEC_3B} and \ref{SEC_4AB}, each of which
admits a single non-trivial conjugacy class with non-vanishing character.

\item
The smallest common multiple $p$ of the graded characters of a single
Nichols algebra has a surprisingly small degree. We want to point out
that the quotient $p/\traceBVt{g}$ typically is a polynomial whose roots
have the same order as $g$ has in $G$.

\item
Although all of the characters factor nicely (see point (1)), there is no
corresponding factorization of the respective representations; we showed
this in Subsection \ref{SEC_3A}.
\end{enumerate}


\section*{Acknowledgement}
The authors wish to thank I.~Heckenberger 
for stimulating discussions.

\bibliographystyle{abbrv}
\bibliography{GradedTraces}

\end{document}